\documentclass[onefignum,onetabnum]{siamart171218}



\usepackage{lipsum}
\usepackage{amsfonts}
\usepackage{bbm}
\usepackage{graphicx}
\usepackage{algorithm, algpseudocode}
\usepackage{epstopdf}
\usepackage{cancel}
\usepackage[caption=false]{subfig}
\usepackage{enumitem}

\graphicspath{{figures/}}

\ifpdf
  \DeclareGraphicsExtensions{.eps,.pdf,.png,.jpg}
\else
  \DeclareGraphicsExtensions{.eps}
\fi


\newsiamremark{remark}{Remark}
\newsiamremark{hypothesis}{Hypothesis}
\crefname{hypothesis}{Hypothesis}{Hypotheses}
\newsiamthm{claim}{Claim}

\headers{Conditional Monte Carlo for Reaction Networks}{D. F.\ Anderson \and  K. W.\ Ehlert}

\title{Conditional Monte Carlo for Reaction Networks
}

\author{
David F. Anderson\thanks{Department of Mathematics, University of Wisconsin-Madison 
  (\email{anderson@math.wisc.edu}) }
  \and
  Kurt W. Ehlert\thanks{Department of Mathematics, University of Wisconsin-Madison 
  (\email{kehlert@math.wisc.edu}) }
  }

\usepackage{amsopn}


\usepackage{soul}

\newcommand{\Z}{\mathbb{Z}}

\newcommand{\tr}[1]{\text{tr}\left(#1\right)}
\DeclareMathOperator*{\argmin}{arg\,min}


\ifpdf
\hypersetup{
  pdftitle={Conditional Monte Carlo for Reaction Networks},
  pdfauthor={David F.\ Anderson \and  Kurt W.\ Ehlert}
}
\fi


\externaldocument{supplement}


\begin{document}

\maketitle

\begin{abstract}
Reaction networks are often used to model interacting species in fields such as biochemistry and ecology. When the counts of the species are sufficiently large, the dynamics of their concentrations are typically modeled via a system of  differential equations.  However, when the counts of some species are small, the  dynamics of the counts are  typically modeled stochastically via a discrete state,  continuous time Markov chain.

A key quantity of interest for such models is the probability mass function of the process at some  fixed time. Since paths of such models are relatively straightforward to simulate, we can estimate the probabilities by constructing an empirical distribution. However, the support of the distribution is often diffuse across a high-dimensional state space, where the dimension is equal to the number of species. Therefore generating an accurate empirical distribution can come with a large computational cost.

We present a new Monte Carlo estimator that fundamentally improves on the ``classical'' Monte Carlo estimator described above. It also preserves much of classical Monte Carlo's simplicity. The idea is basically one of conditional Monte Carlo. Our conditional Monte Carlo estimator has two parameters, and their choice critically affects the performance of the algorithm.  Hence, a  key contribution of the present work is that we demonstrate how to approximate optimal values for these parameters in an efficient manner. Moreover, we provide a central limit theorem for our estimator, which leads to approximate confidence intervals for its error.
\end{abstract}

\begin{keywords}
Monte Carlo, continuous time Markov chain, chemical master equation, nonparametric density estimation, reaction networks
\end{keywords}

\begin{AMS}
65C05, 60J28, 62G07
\end{AMS}

\section{Introduction}\label{sec:intro}

Systems of interacting species appear often  in nature. To better understand the dynamics of such systems, we can model them as  reaction networks with deterministic or stochastic dynamics \cite{AKbook,gillespie2013perspective,karlebach2008modelling, wilkinson2006stochastic}. If the counts of the constituent species are high, then the dynamics are commonly modeled by a system of   differential equations \cite{AKbook, FeinbergLec79, wilkinson2006stochastic}. However, if the count of any species is small, then a stochastic model with a discrete state space is more appropriate \cite{AK2011b, AKbook, McAdams814, rao2002control, Thattai8614, wilkinson2006stochastic}.

Since the amount of each species is necessarily nonnegative and discrete, the state space of the stochastic process is a subset of $\Z_{\ge 0}^d$, where $d$ is the number of species types. Let $\nu$ be the distribution of the initial state, which is often a point mass distribution, and suppose we are interested in the distribution of the state of the process at some fixed time $t>0$. That is, if $X(t)$ is the state of the process at time $t$, then we would like to know the value of
\begin{equation*}
p_t^\nu (x) \stackrel{\text{def}}{=}  P_\nu(X(t)=x),\, x\in\Z_{\ge 0}^d.
\end{equation*}

In general, finding the exact values of $p_t^\nu(\cdot)$ is extremely difficult.  More precisely, the authors are not aware of any general class of models for which $p_t^\nu$ can be solved for explicitly, with the exception of linear, or first-order, models \cite{jahnke2007solving} or, more generally, models that satisfy a dynamical and restricted complex-balanced condition and admit a time-dependent product form Poisson distribution \cite{ASY2019}. However, there are many numerical methods that give an estimate. One type of approach is to approximately solve Kolmogorov's forward equation, which is  called the chemical master equation (CME) in much of the biology and chemistry literature. The CME can be written as
\begin{equation}\label{eq:cme}
\frac{d}{dt} p^\nu_t(x) = \sum_{r=1}^R \big[p_t^\nu(x-\zeta_r)\lambda_r(x-\zeta_r) - p_t^\nu(x)\lambda_r(x)\big],\, x\in\Z_{\ge 0}^d,
\end{equation}
where $R$ is the number of reactions in the system, $\lambda_r: \mathbb{Z}^d_{\ge 0}\to \mathbb{R}_{\ge 0}$ is the intensity (or propensity) function for the $r$th reaction,  $\zeta_r \in \Z^d$ gives the net change in the counts of the species due to an occurrence of the $r$th reaction, and the initial distribution $p_0^\nu(\cdot)$ is given by $\nu$.  See \cref{sec:mathematical_model} for the precise specification of the model, including terminology.

For most models of interest, solving \eqref{eq:cme} entails solving a  high-dimensional (often infinite-dimensional) system of linear ordinary differential equations. Solving such a system directly is almost always very difficult, so there has been a considerable amount of research devoted to the development of fast and accurate approximate algorithms. The general approach for many such algorithms is to first truncate the state space of the system to a smaller subset.  This truncation makes solving the problem computationally feasible, at the cost of introducing a controllable error to the solution. 
After truncation, the new system of ODEs must be solved. 

There is currently a wide variety of methods for performing both the truncation step and solution step.  In particular, there is the finite state projection algorithm \cite{munsky2006finite, vo2016improved}, the uniformization method \cite{didier2009fast}, sliding window methods \cite{henzinger2009sliding, wolf2010solving}, the sparse grid method \cite{hegland2007solver},  the radial basis function approximation \cite{kryven2015solution}, a class of spectral methods \cite{engblom2009spectral, jahnke2010solving}, and methods that specialize to systems with multiple scales  \cite{busch2006numerical, cao2016accurate, macnamara2008stochastic, macnamara2008multiscale, pelevs2006reduction}.  Moreover, there are  tensor methods \cite{kazeev2014direct, sidje2015solving, vo2017adaptive} that  represents  the truncated CME with tensors.  

As an alternative to approximating \eqref{eq:cme} directly via the methods above,  we can take a Monte Carlo approach.  That is, we can generate $n$  independent and identically distributed (i.i.d.) realizations of the process $X$, denoted by $\{X_i\}_{i=1}^n$, and use the Monte Carlo estimator	
\begin{equation}\label{eq:classical_mc_estimator}
\frac{1}{n} \sum_{i=1}^n \mathbbm{1}(X_i(t)=x) \approx \mathbb{E}_{\nu,0}\left[\mathbbm{1}(X(t)=x)\right] = p_t^\nu(x),
\end{equation}
where $\mathbb{E}_{\nu,0}$ is the expectation under the initial distribution $\nu$ and starting time of zero.
By the strong law of large numbers, the approximation becomes an equality as $n$ goes to infinity.

To utilize the above estimator, we need to simulate exact realizations of the process $X$ over the time interval $[0,t]$, and there are many methods to choose from. In particular, there is the Gillespie algorithm, also called the stochastic simulation algorithm, \cite{gillespie1976general}, the next reaction method \cite{gibson2000efficient}, and the modified next reaction method \cite{anderson2007modified}, which are all straightforward to implement and often have similar efficiency.  For our numerical results in the later sections, we used the modified next reaction method. 

One drawback of using the Monte Carlo estimator \cref{eq:classical_mc_estimator} to approximate the solution to the CME \eqref{eq:cme} is that  huge numbers of simulations are generally required to achieve a high level of accuracy. That said, the Monte Carlo estimator has at least two distinct advantages when compared against the methods that approximately solve the CME directly: it is very simple to implement and it is substantially less sensitive to the dimension of the state space.

There are two natural ways to improve upon a Monte Carlo estimator.  The first way is to decrease the time required to generate realizations of the random samples (i.e., the process $X$ in our case). Lowering the time required to generate paths of the  processes that we are interested in has been an active area of research for almost two decades \cite{anderson2007modified, gibson2000efficient, mauch2011efficient, mccollum2006sorting, ramaswamy2009new, slepoy2008constant}. Moreover, researchers have also designed efficient algorithms that generate approximate paths that trade some accuracy for speed \cite{anderson2008incorporating, AK2011,cao2006efficient, cao2007adaptive, ehlert2014lazy, gillespie2001approximate, haseltine2002approximate, rathinam2003stiffness}.

The second way to improve upon a Monte Carlo estimator, and the focus of this article, is to instead lower the variance of the estimator itself.  There are many broadly applicable variance reduction techniques, including coupling methods, control variates, stratified sampling, antithetic random variables, quasi-Monte Carlo, and conditional Monte Carlo \cite{GlassermannBook,mcbookOWEN}.

In this paper, we utilize a form of conditional Monte Carlo to reduce the variance.  Briefly, conditional Monte Carlo follows from the observation that for one-dimensional random variables $X$ and $Y$, defined on the same probability space, we have  $E[X] = E[E[X|Y]]$, and  $\text{Var}(E[X|Y]) \le \text{Var}(X)$, so long as all the expectations are well defined \cite{ASV2017}.  That is, one can always reduce variance by conditioning.  Of course, the ``art'' is in the selection of an appropriate random variable $Y$.

Returning to our situation, define $\mathbb{E}_{\nu,s}[f(X(t)]$ as the expectation of $f(X(t))$ taken with respect to the initial state distribution $\nu$ and starting time $0 \le s \le t$.  That is, $P(X(s) = x) = \nu(x)$.  If $\nu$ is a point-mass distribution at $y \in \Z^d_{\ge0}$, then we  write $\mathbb{E}_{y, s}[f(X(t))]$. Fix $h\in[0,t]$, then
\begin{alignat}{2}
	p^\nu_t(x) &= \mathbb{E}_{\nu,0}\left[{\mathbbm{1}(X(t)=x)}\right]\nonumber\\
	&= \mathbb{E}_{\nu,0}\left[\mathbb{E}_{\nu,0}\left[\left.\mathbbm{1}(X(t)=x)\right| X(t-h)\right]\right]\nonumber\\
	&= \mathbb{E}_{\nu,0}\left[\mathbb{E}_{X(t-h),t-h}\left[\mathbbm{1}(X(t)=x)\right]\right] \,\,\, && \text{(Markov property)}\nonumber\\
	&= \lim_{n\to\infty} \frac{1}{n} \sum_{i=1}^n \mathbb{E}_{X_i(t-h),t-h}\left[\mathbbm{1}(X(t)=x) \right]\text{, a.s.} \quad && \text{(strong law of large numbers)}\label{def:estimator_with_cond}
\end{alignat}
where the $\{X_i(t-h)\}_{i=1}^n$ are i.i.d.\ realizations of $X(t-h)$. A natural estimator for the right hand side of the above equation is
\begin{equation}\label{def:conditional_mc_estimator}
\hat{p}_t^\nu(x;n,m,h) \stackrel{\text{def}}{=}  \frac{1}{n} \sum_{i=1}^n \frac{1}{m} \sum_{j=1}^m \mathbbm{1}(X_{ij}(t)=x),
\end{equation}
where we generate the $X_{ij}$ in the following manner:
\begin{itemize}
\item simulate $n$ independent realizations of the process $X$ over the time interval $[0,t-h]$, each with an initial value determined by  $\nu$, and denote the $i$th realization by $X_i$,
\item for each $i \in \{1,\dots,n\}$, generate $m$ conditionally independent realizations over the time interval $[t-h,t]$, each of which has  initial state $X_i(t-h)$.  Denote the $j$th such realization by $X_{ij}$.
\end{itemize}
Note that for each $j \in \{1,\dots,m\}$, the process $X_{ij}$ is equal to $X_i$ over the interval $[0,t-h]$.   See \cref{figure1}. 

Since $\{X_{i_1 j}(t)\}_{j=1}^m$ and $\{X_{i_2 j}\}_{j=1}^m$ are independent for $i_1 \neq i_2$, the strong law of law numbers implies that with probability one we have
\begin{equation*}
\lim_{n\to\infty} \hat{p}_t^\nu(x;n,m,h) = \mathbb{E}_{\nu,0}\left[ \frac{1}{m} \sum_{j=1}^m \mathbbm{1}(X_{ij}(t)=x)\right] = p_t^\nu(x).
\end{equation*}
\begin{figure}\label{figure1}
	\centering
    \subfloat[Two independent realizations of the process over the time interval ${[0,2]}$.\label{fig:classical_mc_paths}]{{\includegraphics[width=0.35\textwidth]{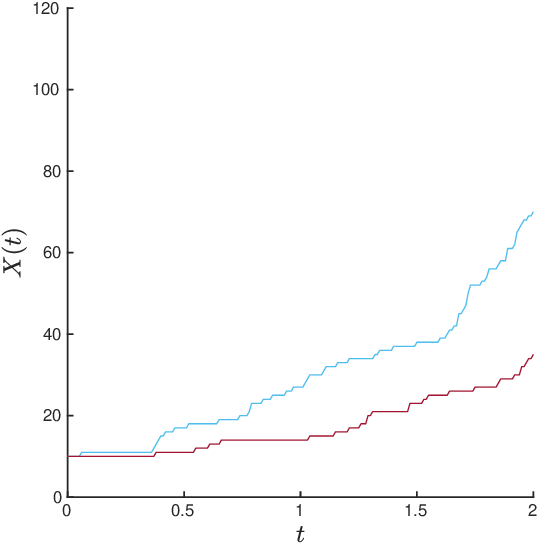}}}	
    \qquad
    \subfloat[Two independent realizations of the process generated over ${[0,1.5]}$.  Each is then followed by $m$ conditionally  independent ``branches''  simulated over ${[1.5, 2]}$.\label{fig:conditional_mc_paths}]{{\includegraphics[width=0.35\textwidth]{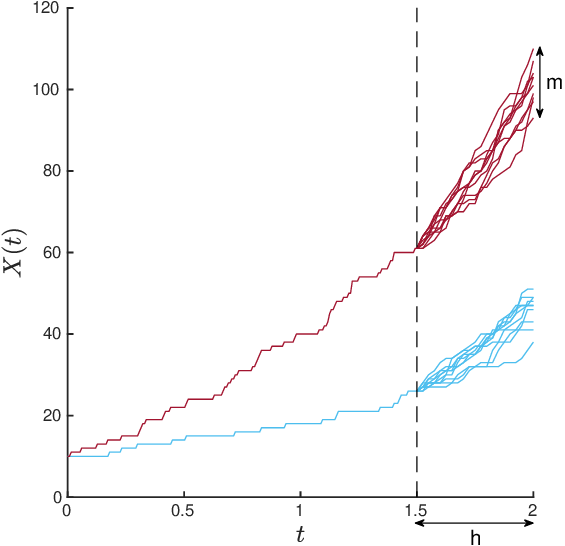}}}
    \caption{Paths generated for the birth model $X\rightarrow 2X$.}
\end{figure}
Hereafter we will refer to the original estimator \cref{eq:classical_mc_estimator} as \textit{classical Monte Carlo}, and the new estimator \cref{def:conditional_mc_estimator} as \textit{conditional Monte Carlo}.  The conditional Monte Carlo estimator has two unspecified parameters, denoted $m$ and $h$. The number of branches is determined by $m$, and the time at which branching occurs is controlled by $h$. If $m$ and $h$ are fixed, then the remaining parameter $n$ is simply chosen large enough such that the estimator's variance is below some desired threshold.  If $m=1$, $h=0$, or $h=t$, then the conditional and classical Monte Carlo estimators are the same. If $m > 1$ and $h\in(0,t)$, then for the same computational cost as classical Monte Carlo,  the conditional Monte Carlo estimator obtains more observations of $X(t)$. 
We would like to choose the values of $m$ and $h$ such that, in some sense, our new estimator is more efficient than classical Monte Carlo. In \cref{sec:optimization}, we provide an algorithm for finding optimal values of $m$ and $h$, which is the key contribution of this article.

The distributions produced by our conditional Monte Carlo method can, of course, be used to construct unbiased estimates of moments and other expectations.  However,  we stress here that our new estimator is optimized for estimating the \textit{entire distribution} of the process and not for estimating expectations.  Estimating expectations is a separate--and very important--problem that has seen a large amount of research activity over the past decade (see \cite{AndersonGangulyKurtz, AndHigham2012, AHS2014, AHS2017, AK2012,cao2006efficient,cao2007adaptive, MTV2016a} for a subset of  works focusing on this problem).  In fact, in  \cref{sec:expectations} we prove that the type of conditioning we carry out here (optimized for estimating the entire distribution) can \textit{not} be more efficient than standard Monte Carlo for the estimation of the expected value of a linear birth process at some future time $t>0$.  This may seem surprising at first since conditioning always reduces the variance (as discussed above).  However, in the present method we also use Monte Carlo to solve for the conditional expectation, which has its own cost. Determining better, and perhaps optimal, ways to estimate expectations via conditional Monte Carlo in the present context is a worthy direction of future research and will be discussed further in \cref{sec:conclusion} and \cref{sec:expectations}.

The remainder of the article is organized as follows. In \cref{sec:mathematical_model}, we define the continuous time Markov chain model of reaction networks. Then in \cref{sec:optimization}, we present an algorithm for finding the optimal values of $m$ and $h$, and also the full algorithm, \cref{alg:cond_mc}, for the implementation of the conditional Monte Carlo estimator. Next, in \cref{sec:numerical_results}, we  give  numerical results demonstrating the order of magnitude improvement that can be obtained with the use of conditional Monte Carlo in the current context.  In \cref{sec:clt}, we derive a central limit theorem for the error of the conditional Monte Carlo estimator and then test it on examples. Finally, in \cref{sec:conclusion}, we summarize our results and suggest ideas for future work. The proofs of the main results are in \cref{sec:additional_proofs}. The supplementary material contain more figures related to numerical results. An example MATLAB implementation of the conditional Monte Carlo algorithm is at \url{https://github.com/kehlert/conditional_monte_carlo_example}.

\section{Mathematical model}\label{sec:mathematical_model}

Suppose our reaction network has $d$ types of species and $R$ reactions. For $1 \le r\le R$, 
\begin{itemize}
\item[(i)]  we will denote by $\zeta_r$  the reaction vector for the $r$th reaction, meaning that if the $r$th reaction occurs at time $t$, and the process is currently in state $x\in \Z^d_{\ge 0}$, then the new state becomes $x + \zeta_r$;
\item[(ii)] we will denote by $\lambda_r:\Z^d_{\ge 0} \to [0,\infty)$ the intensity, or propensity, function of the $r$th reaction.
\end{itemize}
A standing assumption is that $\lambda_r(x) = 0$ if $x + \zeta_r \notin \Z^d_{\ge 0}$, which preserves the non-negativity of the components.  
We  let $X$ be  a continuous time Markov chain (CTMC) whose transition rate from state $x$ to $x'$ is
\begin{equation*}
q(x,x') = \sum_{r =1}^R \lambda_r(x) \mathbbm{1}(x' - x = \zeta_r).
\end{equation*}
Hence, $X$ is a Markov process with infinitesimal generator $Af(x) = \sum_{r = 1}^R \lambda_r(x)(f(x+\zeta_r) - f(x)),$ where $f: \mathbb{Z}_{\ge 0}^d \to \mathbb{R}$ is a bounded function with compact support.
We will denote our process by $X$, so that $X(t)\in\Z^d_{\ge 0}$ is the vector whose $i$th component gives the count of species $i$ at time $t \ge 0$.

The most common choice of intensity function is stochastic mass action kinetics.  Suppose that we require $y_i$ copies of species $i$ for the $r$th reaction to occur.   Then we say that $\lambda_r$ has stochastic mass action kinetics if
\begin{equation}\label{def:mass_action}
\lambda_r(x) = \kappa_r  \prod_{i=1}^d \frac{x_i !}{(x_i-y_i) !} \mathbbm{1}(x_i \ge y_i),
\end{equation}
for some  $\kappa_r>0$, which is called the rate constant of the reaction. For example, for the reaction $2A + B \to A + C$, where $A$, $B$, and $C$ are the species types in our model system, the reaction vector is $(-1,-1,1)^T$ and $y = (2,1,0)^T$, in which case $\lambda_r(x) = \kappa_r x_1(x_1-1)x_2,$ where we have ordered the species alphabetically.

None of our theoretical results assume that the $\lambda_r$ has the above mass action form, but the models we tested do use it unless otherwise noted.

One well--known representation for the stochastic process $X$ is the \textit{random time change representation} of Thomas Kurtz \cite{AK2011b, AKbook,kurtz1980representations}
\begin{equation}\label{def:ctmc_random_time_change}
X(t) = X(0) + \sum_{r=1}^R Y_r\left(\int_0^t \lambda_r\left(X(s)\right)\,ds\right)\zeta_r,
\end{equation}
where $X(0)$ is the initial state and the $Y_r$ are independent unit-rate Poisson processes. We will  make use of the above representation in some of our proofs.

\subsection{Examples}
\label{sec:examples}
In the subsequent sections, we intersperse numerical results, and below is a list of all the example models we used. The species to the left of the arrows are the reactants (giving the counts of the species consumed in the reaction), and those to the right are the products. The numbers above the arrows are the rate constants $\kappa_r$. Unless otherwise noted,  for every model and reaction we define the intensities $\lambda_r$ with \cref{def:mass_action}.
 
\begin{enumerate}[label=(\roman*)]
\item
\textit{Birth}

The initial state is $X(0)=10$ and $t=2$. The single reaction is
\begin{equation*}
X \xrightarrow{1} 2X.
\end{equation*}
Following \cref{def:mass_action}, the rate of the reaction is $\lambda(x) = x$.

\item
\textit{Birth--Death}

The initial state is $X(0)=100$ and $t=2$. There are two reactions
\begin{equation*}
\emptyset \xrightarrow{50} X,\, X \xrightarrow{1} \emptyset.
\end{equation*}
Following \cref{def:mass_action}, the rates of the reactions are $\lambda_1(x) = 50,$ and $\lambda_2(x) = x$, respectively.

\item
\textit{Lotka--Volterra}

This model is also often called the predator-prey model. The initial state is $A(0)=200$ and $B(0)=100$. We set $t=4$. The reactions are
\begin{equation*}
A \xrightarrow{2} 2A,\, A+B \xrightarrow{0.01} 2B, \, B \xrightarrow{2} \emptyset.
\end{equation*}
Following \cref{def:mass_action}, and after  ordering the species as $(A,B)$,  the rates of the reactions are $\lambda_1(x) = 2x_1$, $\lambda_2(x) = 0.01 x_1x_2$, and $\lambda_3(x) = 2x_2$, respectively.

\item
\textit{Dimerization}

In this model, $mRNA$ is translated into the protein $P$, which then dimerizes into $D$, and the dimer $D$ accumulates over time. The initial state for every species is zero except for $G(0)=1$. We set $t=1$. The reactions are
\begin{align*}
&G \xrightarrow{25} G+mRNA,\, mRNA \xrightarrow{100} mRNA + P\\
&2P \xrightarrow{0.001} D, \, mRNA \xrightarrow{0.1} \emptyset,\, P \xrightarrow{1} \emptyset.
\end{align*}
Following \cref{def:mass_action}, and after ordering the species as $(G,mRNA,P,D)$, the rates of the reactions are $\lambda_1(x) = 25x_1$, $\lambda_2(x) = 100 x_2$,  $\lambda_3(x) = 0.001 x_3(x_3-1)$, $\lambda_4(x) = 0.1x_2$, and $\lambda_5(x) = x_3$ respectively.

\item
\textit{Toggle}

Each species represses the production of the other, which leads to a  probability mass function that is multimodal. The initial state is $A(0)=B(0)=0$. We set $t=100$. The reactions are
\begin{equation*}
\emptyset \xrightarrow{} A,\, A \xrightarrow{} \emptyset, \, \emptyset \xrightarrow{} B, \, B \xrightarrow{} \emptyset.
\end{equation*}
For this model, the first and third  intensity functions are not chosen to be mass action.  Specifically, we let 
\[
	\lambda_1(x) = \frac{50}{1+2x_2}, \ \lambda_2(x) = x_1, \ \lambda_3(x) = \frac{50}{1 +2x_1}, \ \lambda_4(x) = x_2,
\]
where we again ordered the species as $(A,B)$.

\item
\textit{Fast/Slow}

$A$ and $B$ quickly convert into one another, and $B$ slowly turns into $C$. The initial state is $A(0) = B(0) = 100$ and $C(0) = 0$. We set $t=10$. The reactions are
\begin{equation*}
A \xrightarrow{10} B,\, B \xrightarrow{10} A, \, B \xrightarrow{0.1} C.
\end{equation*}
Following \cref{def:mass_action}, and after ordering the species as $(A,B,C)$, the rates are $\lambda_1(x) = 10x_1,$ $\lambda_2(x) = 10x_2$, and $\lambda_3(x) = 0.1x_2$, respectively.
\end{enumerate}

\section{Determining the values of \texorpdfstring{$m$}{\textit{m}} and \texorpdfstring{$h$}{\textit{h}} via optimization}\label{sec:optimization}

The conditional Monte Carlo estimator \cref{def:conditional_mc_estimator} is of little value without knowledge of which values of $m$ and $h$ to use. In this section, we will show that appropriate values can be found by numerically solving an easy optimization problem.

Recall that the distribution of the process is denoted by $p_t^\nu$, and we denote an estimate of this distribution by $\hat p_t^\nu$.  We will measure the quality of the estimation  via the  \textit{mean integrated squared error} (MISE), which is
\begin{equation}\label{def:MISE}
	\text{MISE}(\hat p_t^\nu) \stackrel{\text{def}}{=} \mathbb{E}_{\nu,0}\left[\sum_{x\in\mathbb{Z}_{\ge 0}^d} \!\! \big( \hat{p}_t^\nu(x)-p_t^\nu(x)\big)^2\right].
\end{equation}
Note that if $\hat p_t^{\nu}$ is constructed via our conditional Monte Carlo estimator, then  it, and by extension $\text{MISE}(\hat p_t^\nu)$, is a function of $n, m$, and $h$. 
Suppose we have a fixed computational budget, which we denote as $b$. We then want to choose the values of $n$, $m$, and $h$ so that we minimize  MISE$(\hat p_t^\nu)$ subject to our budget constraint $b$. We choose the squared error in \eqref{def:MISE}, as opposed to the total variation norm or some other $L^p$ error, as this choice was more amenable to analysis, especially in the derivation of the central limit theorem in Section \ref{sec:clt}.

\subsection{Computational cost model}\label{sec:comp_cost_model}

Assuming that our model is non-explosive\footnote{A process is said to explode if there are   an infinite number of transitions in a finite amount of time. A process is said to be non-explosive if the probability of an explosion is zero for all initial distributions  \cite{ACKK2018,NorrisMC97}.}  the expected number of reactions required to generate $\{X_{1j}\}_{j=1}^m$ is given by
\begin{equation*}
\underbrace{\mathbb{E}_{\nu,0}\left[\int_0^{t-h}\!\!\!\!\!\! \lambda_0(X(s))\,ds\right]}_{\text{expected \# of reactions in } [0,t-h]} \! + \,\, m\cdot \!\!\!\!\! \underbrace{\mathbb{E}_{\nu,0}\left[\int_{t-h}^t \!\!\!\! \lambda_0(X(s))\,ds\right]}_{\text{expected \# of reactions in } [t-h,t]}\!\!\!\!,
\end{equation*}
where $\lambda_0(x) = \sum_{r=1}^R \lambda_r(x)$ (see \cref{thm:expected_number_reactions}).  Hence, the expected computational cost for our conditional Monte Carlo estimator is
\begin{equation}\label{def:comp_cost_model}
n\cdot c\left(\mathbb{E}_{\nu,0}\left[\int_0^{t-h} \!\!\!\!\!\! \lambda_0(X(s))\,ds\right] + m\cdot \mathbb{E}_{\nu,0}\left[\int_{t-h}^t \!\!\!\! \lambda_0(X(s))\,ds\right]\right),
\end{equation}
where $c > 0$ is an unknown constant.

Since we cannot generally evaluate the expectations in the cost model \cref{def:comp_cost_model}, as this would be as difficult as the problem we are attempting to solve, we need to estimate them. To do so, fix a relatively small $\tilde{n}$  and  simulate $\tilde{n}$ i.i.d.\ paths $\{X_i\}_{i=1}^{\tilde{n}}$. Then the expectations are approximately equal to
\begin{equation}\label{eqn:total_intensity_integral_estimate}
\frac{1}{\tilde{n}} \sum_{i=1}^{\tilde{n}} \int_0^{t-h}\!\!\!\!\!\! \lambda_0(X_i(s))\,ds,\,\text{and } \frac{1}{\tilde{n}} \sum_{i=1}^{\tilde{n}} \int_{t-h}^t \!\!\!\! \lambda_0(X_i(s))\,ds.
\end{equation}
Importantly, for the fixed set of $\tilde n$ paths, the values \cref{eqn:total_intensity_integral_estimate} can be computed for a variety of different $h$ values.    The process $X_i$  is piecewise constant, and therefore so is $\lambda_0(X_i)$. Thus, for any value of $h$, we can easily compute the integrals so long as we have stored the  jump times of $X_i$ and the value of $\lambda_0(X_i)$ at each jump.
%

\subsection{Optimization problem}\label{sec:optimization_problem}
Given a reaction network, our goal is to find values of $n$, $m$, and $h$ that minimize the mean integrated squared error (MISE) \cref{def:MISE} for our conditional Monte Carlo estimator \eqref{def:conditional_mc_estimator} while staying within our computational budget of $b$. More precisely, we want to solve the following optimization problem
\begin{equation}
\label{eq:tominimize}
\begin{split}
&\min_{n,m,h} \,\,\underbrace{\mathbb{E}_{\nu,0}\left[\sum_{x\in\mathbb{Z}_{\ge 0}^d} \!\! \big(\hat{p}_t^\nu(x;n,m,h)-p_t^\nu(x)\big)^2\right]}_{\text{mean integrated squared error (MISE)}}, 
\end{split}
\end{equation}
 subject to
 \begin{equation}
 \begin{split}
&n\cdot  c\left(\mathbb{E}_{\nu,0}\left[\int_0^{t-h}\!\!\!\!\!\! \lambda_0(X(s))\,ds\right] + m \cdot \mathbb{E}_{\nu,0}\left[\int_{t-h}^t \!\!\!\! \lambda_0(X(s))\,ds\right]\right) \le b\\
&n,m\in\mathbb{Z}_{\ge 1} \text{ and } 0 \le h \le t.
\end{split}
\label{eq:constraint}
\end{equation}

The following theorem will allow us to transform the above optimization problem into a more solvable form.
\begin{theorem}\label{thm:MISE_simplification}
Suppose the process $X$ is non-explosive.  For any fixed $n,m\in\mathbb{Z}_{\ge 1}$ and $h\in[0,t]$
\begin{multline*}
\mathbb{E}_{\nu,0}\left[\sum_{x\in\mathbb{Z}_{\ge 0}^d} \!\! \big(\hat{p}_t^\nu(x;n,m,h)-p_t^\nu(x)\big)^2 \right] = \\
\frac{1}{n}\left[\frac{1}{m} + \left(1-\frac{1}{m}\right) P_\nu(X_{11}(t)=X_{12}(t))  - \sum_{x\in\mathbb{Z}_{\ge 0}^d} \!\!p_t^\nu(x)^2 \right].
\end{multline*}
\end{theorem}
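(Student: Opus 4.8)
The plan is to first recognize that, because the conditional Monte Carlo estimator is unbiased, the mean integrated squared error is exactly the integrated variance. Indeed, the derivation leading to \cref{def:conditional_mc_estimator} shows that $\mathbb{E}_{\nu,0}[\hat{p}_t^\nu(x;n,m,h)] = p_t^\nu(x)$ for every $x$, so each summand satisfies $\mathbb{E}_{\nu,0}[(\hat{p}_t^\nu(x;n,m,h)-p_t^\nu(x))^2] = \mathrm{Var}_{\nu,0}(\hat{p}_t^\nu(x;n,m,h))$. Since every term is nonnegative, Tonelli's theorem permits interchanging the expectation with the sum over $x\in\Z_{\ge 0}^d$, so the left-hand side of the claim equals $\sum_{x\in\Z_{\ge 0}^d} \mathrm{Var}_{\nu,0}(\hat{p}_t^\nu(x;n,m,h))$.

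Next I would exploit the independence structure. Writing $Z_i(x) = \frac{1}{m}\sum_{j=1}^m \mathbbm{1}(X_{ij}(t)=x)$, the estimator is $\hat{p}_t^\nu(x;n,m,h) = \frac{1}{n}\sum_{i=1}^n Z_i(x)$, and the random variables $\{Z_i(x)\}_{i=1}^n$ are i.i.d.\ because the $n$ initial paths are independent and the branches attached to distinct initial paths are independent. Hence $\mathrm{Var}_{\nu,0}(\hat{p}_t^\nu(x;n,m,h)) = \frac{1}{n}\mathrm{Var}_{\nu,0}(Z_1(x))$, which reduces the problem to a single-site second-moment computation.

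To compute $\mathbb{E}_{\nu,0}[Z_1(x)^2]$ I would expand the square and split the double sum over the branch indices $j,k$ into the $m$ diagonal terms and the $m(m-1)$ off-diagonal terms. Each diagonal term contributes $\mathbb{E}_{\nu,0}[\mathbbm{1}(X_{1j}(t)=x)] = p_t^\nu(x)$, since the indicator is idempotent, while by the exchangeability of the branches $X_{11},\dots,X_{1m}$ each off-diagonal term equals $P_\nu(X_{11}(t)=x,\,X_{12}(t)=x)$. Subtracting the square of the mean then gives $\mathrm{Var}_{\nu,0}(Z_1(x)) = \frac{1}{m}p_t^\nu(x) + \left(1-\frac{1}{m}\right)P_\nu(X_{11}(t)=x,\,X_{12}(t)=x) - p_t^\nu(x)^2$.

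Finally I would sum this expression over $x$. The cross term collapses via $\sum_{x} P_\nu(X_{11}(t)=x,\,X_{12}(t)=x) = P_\nu(X_{11}(t)=X_{12}(t))$, the last term is $\sum_{x} p_t^\nu(x)^2$, and the first term uses $\sum_{x} p_t^\nu(x) = 1$; combining with the factor $1/n$ yields the stated identity. I expect the only genuine subtleties to be the justification of the sum–expectation interchange (handled by nonnegativity and Tonelli) and the role of the non-explosivity hypothesis, which is precisely what guarantees $X(t)\in\Z_{\ge 0}^d$ almost surely and hence $\sum_{x} p_t^\nu(x) = 1$; the diagonal/off-diagonal algebra is routine once the i.i.d.\ and exchangeability structure is identified.
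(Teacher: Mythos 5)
Your proposal is correct and follows essentially the same route as the paper's proof: unbiasedness reduces the MISE to a sum of variances, the interchange of expectation and sum is justified by nonnegativity (the paper invokes monotone convergence, you invoke Tonelli --- the same fact), and the variance of a single group is computed by splitting the double sum over branches into $m$ diagonal and $m(m-1)$ exchangeable off-diagonal terms. Your closing observation that non-explosivity is what guarantees $\sum_x p_t^\nu(x)=1$ is a nice explicit accounting of a hypothesis the paper uses only implicitly.
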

The proof of \cref{thm:MISE_simplification} can be found in \cref{sec:proofThm31}.

If we allow $n$ to be continuous, then we can use the constraint \cref{eq:constraint} to solve for $n^{-1}$, and subsequently eliminate the constraint by substitution. This leads to a simpler 
optimization problem. In particular, let
\begin{align*}
	f(m,h) \stackrel{\text{def}}{=}  &\left(\frac{1}{m}\,\mathbb{E}_{\nu,0}\left[\int_0^{t-h}\!\!\!\!\!\! \lambda_0(X(s))\,ds\right] + \mathbb{E}_{\nu,0}\left[\int_{t-h}^t \!\!\!\! \lambda_0(X(s))\,ds\right]\right)\\
&\times \left(1 + (m-1) P_\nu(X_{11}(t)= X_{12}(t)) - m \!\! \sum_{x\in\mathbb{Z}_{\ge 0}^d} \!\! p_t^\nu(x)^2 \right).
\end{align*}
Then the original optimization problem \cref{eq:tominimize,eq:constraint} is equivalent to
\begin{equation}\label{def:bound_constrained_optimization_problem}
\begin{split}
&\min_{m,h} f(m,h)\\
&m\in\mathbb{Z}_{\ge 1}, 0 \le h \le t.
\end{split}
\end{equation}
Note that both $c$ and $b$ have dropped out of the optimization problem.

There are three terms in $f$ that we must know, or be able to approximate, in order to solve \cref{def:bound_constrained_optimization_problem}.
\begin{itemize}
\item The expectations of the integrals.  We discussed how to approximate these in \cref{sec:comp_cost_model}.
\item The sum $\sum_x p_t^\nu(x)^2$. However, we note that $\sum_x p_t^\nu(x)^2$ is the probability that two independent paths end up in the same state at time $t$. For many models, including the ones we tested, that sum is much smaller than $P_\nu(X_{11}(t)=X_{12}(t))$ and is close to zero. Thus for our examples, we replace the sum with zero and make that our general recommendation.  
\item The term $P_\nu(X_{11}(t)=X_{12}(t))$, whose approximation is the subject of the next section.
\end{itemize}

Note that there are many models for which $\sum_x p_t^\nu(x)^2$ will not be near zero.  However, for such models a small number of states will necessarily have a large probability.  An example of such a model would be a Birth-Death model, as in Section \ref{sec:examples}, with input rate 1 and output rate 1.  Such a model has a stationary distribution that is Poisson with a parameter of 1  \cite{anderson2010product}, and so for large $t$ the distribution $p_t^\nu$ will concentrate on the set $\{0,1,2,3\}$.  Other examples where $\sum_x p_t^\nu(x)^2$ is not small include those with extinction events.  For such models, it would not be appropriate to set this term to zero.  However, for models with diffuse probability mass functions, i.e.,  those models for which estimating $p_t^\nu$ is difficult and are the focus of this paper, the assumption will often be valid.

\subsection{Approximating the joint probability}
In order to optimize the objective function $f(m,h)$ in \cref{def:bound_constrained_optimization_problem}, we need to know, or be able to quickly approximate, the term $P_\nu(X_{11}(t)= X_{12}(t))$.   The following theorem, proven in \cref{sec:proofThm32}, will allow us to make a good approximation, without requiring any additional simulations.  The theorem makes use of the Skellam$(\mu_1,\mu_2)$ distribution, which is the distribution of the difference between two independent Poisson random variables with parameters $\mu_1$ and $\mu_2$, respectively.

\begin{theorem}\label{thm:skellam_approx}
	Let $S$ be the $d\times R$  matrix whose $r$th column is $\zeta_r$ and let  $\text{null}(S)$  be the right nullspace of $S$ restricted to integer values.  Let $X$ and $Z$ satisfy
	\begin{align*}
		X(t) &= X(0) + \sum_{r=1}^R Y_r^{X}\left( \int_0^t \lambda_r(X(s)) ds \right)\zeta_r,\\
		Z(t) &= X(0) + \sum_{r=1}^R Y_r^{Z}\left( \int_0^t \lambda_r(X(s)) ds \right)\zeta_r,
	\end{align*}
	where the $Y_r^{X}$ and $Y_r^{Z}$ are independent, unit-rate Poisson processes.  Assume that $X$ is non-explosive. For each $1 \le r \le R$ and $0\le a \le b \le t$, denote 
	\[
		\Lambda_r^{a,b} = \int_a^b \lambda_r(X(s)) ds,
	\]
	and let $K_r^{a,b}$ have the Skellam($\Lambda_r^{a,b},\Lambda_r^{a,b})$ distribution.  Then
\begin{equation}\label{eqn:joint_prob}
	P_\nu(X(t) = Z(t)) = \!\!\!\! \sum_{k\in\text{null}(S)} \!\!\!\! \mathbb{E}_{\nu,0}\left[\prod_{r=1}^R P\left(\left.K_r^{0,t} = k_r \,\right|\, \Lambda_r^{0,t}\right)\right].
\end{equation}
\end{theorem}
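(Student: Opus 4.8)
The plan is to condition on the entire realized time change, i.e.\ on the integrals $\Lambda_r := \Lambda_r^{0,t} = \int_0^t \lambda_r(X(s))\,ds$ for $r=1,\dots,R$, and then exploit that, given these integrals, the two families of reaction counts appearing in the representation \eqref{def:ctmc_random_time_change} become conditionally independent Poisson random variables. Non-explosivity of $X$ guarantees that each $\Lambda_r$ is finite almost surely, so every evaluation $Y_r^X(\Lambda_r)$ and $Y_r^Z(\Lambda_r)$ is well defined, and all the interchanges below are licit.

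First I would set $N_r^X := Y_r^X(\Lambda_r)$ and $N_r^Z := Y_r^Z(\Lambda_r)$, so that, writing $S$ for the stoichiometric matrix, $X(t) - Z(t) = \sum_{r=1}^R (N_r^X - N_r^Z)\zeta_r = S\mathbf{K}$, where $\mathbf{K} = (N_1^X - N_1^Z,\dots,N_R^X - N_R^Z)^T$. Because $\mathbf{K}$ is integer valued, the event $\{X(t)=Z(t)\}$ coincides exactly with $\{\mathbf{K}\in\text{null}(S)\}$, since $\text{null}(S)$ is precisely $\ker S$ restricted to the integer lattice. The whole problem thus reduces to identifying the conditional law of $\mathbf{K}$ given the $\Lambda_r$ and summing its mass over $\text{null}(S)$.

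The heart of the argument is the claim that, conditional on $(\Lambda_1,\dots,\Lambda_R)$, the $2R$ counts $N_r^X, N_r^Z$ are mutually independent, each Poisson with mean $\Lambda_r$. Granting this, $K_r = N_r^X - N_r^Z$ is a difference of two independent Poisson$(\Lambda_r)$ variables and hence, by definition, is Skellam$(\Lambda_r,\Lambda_r)$, and the $K_r$ inherit conditional independence across $r$. Consequently $P_\nu(X(t)=Z(t)\mid \Lambda_1,\dots,\Lambda_R) = P(\mathbf{K}\in\text{null}(S)\mid \Lambda_1,\dots,\Lambda_R) = \sum_{k\in\text{null}(S)}\prod_{r=1}^R P(K_r = k_r\mid \Lambda_r)$, the product factorizing by conditional independence. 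Taking $\mathbb{E}_{\nu,0}$ of both sides and interchanging the countable sum over $k\in\text{null}(S)$ with the expectation by Tonelli's theorem (every term is nonnegative) then yields \eqref{eqn:joint_prob}.

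The step I expect to be the main obstacle, and the one deserving the most care, is justifying the conditional Poisson/independence structure, since it is the only non-bookkeeping ingredient. For $Z$ it is immediate: $\{Y_r^Z\}$ is independent of $X$ and therefore of every $\Lambda_r$, and a unit-rate Poisson process evaluated at a time independent of it is Poisson distributed with that time as its mean, independently across the independent processes $Y_1^Z,\dots,Y_R^Z$. The delicate point is establishing the same Poissonization for the counts driving $X(t)$, because there the evaluation time $\Lambda_r$ is itself a functional of the path; the clean route is to place $X(t)$ and $Z(t)$ on the same footing as independent Poissonizations of the common realized compensator $(\Lambda_1,\dots,\Lambda_R)$, so that the independence of the driving noise from the conditioned time change applies symmetrically to both. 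Once this structure is in hand, the remaining manipulations—the reduction of $\ker S$ to its integer lattice $\text{null}(S)$, the recognition of the Skellam difference, and the Tonelli interchange—are routine.
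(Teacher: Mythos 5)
Your proposal follows the same route as the paper's proof: rewrite $\{X(t)=Z(t)\}$ as $\{Y^X-Y^Z\in\text{null}(S)\}$, condition on the integrated intensities $\Lambda^{0,t}=(\Lambda_1^{0,t},\dots,\Lambda_R^{0,t})$, factor the conditional probability across reactions, identify each difference as a Skellam variable, and interchange the sum and expectation by Tonelli. You have also correctly isolated the only step that carries real weight: the claim that, conditional on $\Lambda^{0,t}$, the $2R$ counts $Y_r^X(\Lambda_r^{0,t})$, $Y_r^Z(\Lambda_r^{0,t})$ are mutually independent Poisson variables with means $\Lambda_r^{0,t}$. For the $Z$-side this is immediate, exactly as you argue, since $Y^Z$ is independent of $X$ and hence of the evaluation times.

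The gap is on the $X$-side, and your proposed resolution --- placing $X$ and $Z$ ``on the same footing as independent Poissonizations of the common realized compensator'' --- is not available: $Y^X$ is the noise that generates $X$, so it is \emph{not} independent of $\Lambda^{0,t}$, and the conditional law of $Y_r^X(\Lambda_r^{0,t})$ given $\Lambda_r^{0,t}$ need not be Poisson$(\Lambda_r^{0,t})$. A concrete obstruction: for the single reaction $X\to\emptyset$ with $\lambda(x)=x$ and $X(0)=1$, one has $\Lambda^{0,t}=\min(\tau,t)$ with $\tau$ the exponential death time, and conditional on $\Lambda^{0,t}$ the count $Y^X(\Lambda^{0,t})$ is \emph{deterministic} (equal to $1$ if $\Lambda^{0,t}<t$ and $0$ otherwise). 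For that model a direct expansion gives $P_\nu(X(t)=Z(t))=1-2t+\tfrac{5}{2}t^2+O(t^3)$ while the right side of \eqref{eqn:joint_prob} equals $1-2t+4t^2+O(t^3)$, so the identity cannot be recovered by any bookkeeping. You should know that the paper's own proof asserts this conditional independence/Poisson structure in a single unjustified sentence, so you have not introduced a new error so much as located the soft spot in the published argument; but neither your sketch nor the paper closes it, and the displayed equality holds exactly only for the genuinely independent Poissonization $Z$ (the paper ultimately deploys the result as an approximation, which is the honest reading).
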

Note that $X$ is the process \cref{def:ctmc_random_time_change} that is of interest to us.  Returning to our setup, if we assume that
\begin{equation*}
\int_{t-h}^t \!\!\!\! \lambda_r(X_{11}(s))\,ds \approx \int_{t-h}^t \!\!\!\! \lambda_r(X_{12}(s))\,ds,
\end{equation*}
which should be valid for small $h$,
then \cref{thm:skellam_approx} leads to an approximation of $P_\nu(X_{11}(t)= X_{12}(t))$. In particular, we may sample $\tilde{n}$ paths and for the $i$th such path define
\begin{equation*}
\Lambda_{r,i}^{t-h,t} = \int_{t-h}^t \!\!\!\! \lambda_r(X_i(s))\,ds,\, 1\le i \le \tilde{n}.
\end{equation*}
Then $P_\nu(X_{11}(t)=X_{12}(t)) \approx \hat{P}_\nu(X_{11}(t)=X_{12}(t))$, where
\begin{equation}\label{def:joint_prob_approx}
\hat{P}_\nu(X_{11}(t)=X_{12}(t)) \stackrel{\text{def}}{=} \sum_{k\in\tilde{N}} \frac{1}{\tilde n}\sum_{i=1}^{\tilde{n}} \prod_{r=1}^R P\left(\left.K_r^{t-h,t} = k_r \,\right|\, \Lambda_{r,i}^{t-h,t}\right),
\end{equation}
and $\tilde{N}$ is a finite subset of $\text{null}(S)$.

To find $\tilde{N}$, we use the ``Algorithm for Solving the Linear Diophantine Equation Problem'' from section 1.5.2 of \cite{conforti2014integer}. In general, the algorithm finds solutions $x\in\mathbb{Z}^d$ to linear equations of the form $Ax=b$ for rational $A$ and $b$. In our case, we enumerate solutions to $S k=0$ for $k\in\mathbb{Z}^d$. Generally, there are infinitely many solutions, however the right-hand side of \eqref{def:joint_prob_approx} is always maximized at $k=0$, and decreases as $k$ moves away from $0$. Thus we approximate \eqref{def:joint_prob_approx} by starting at $k=0$ and enumerating all ``nearby'' solutions. \Cref{alg:enumerate_null_S} shows how to apply the algorithm from \cite{conforti2014integer} to our particular problem.  In all of our numerical examples, we chose $C = 4$ in \Cref{alg:enumerate_null_S}.

\begin{algorithm}
    \caption{Algorithm for enumerating a finite subset of $\text{null}(S) \in\mathbb{Z}^{d \times R}$}
    \begin{algorithmic}[1] 
    	\Require the stoichiometry matrix $S$ and $C \in\mathbb{Z}_{> 0}$
       	\If {$S$ does not have full row rank}
       	    \State Remove redundant equations from the system and replace $S$.
        \EndIf

        \State
        \State Transform $S$ into its Hermite normal form $H$, and store the matrix $U$ that satisfies $H = SU$.
        
        \State $r \gets R - \text{rank}(S)$
        \State Let $\tilde{U}$ be the matrix containing the last $r$ columns of $U$.
        
        \State
        \State $\tilde{N} \gets \left\{\left.\tilde{U} z \right\vert z\in \mathbb{Z}^{r}, ||z||_\infty \le C \right\}$

    \end{algorithmic}\label{alg:enumerate_null_S}
\end{algorithm}

\begin{algorithm}
    \caption{Algorithm for computing $\hat{P}_\nu(X_{11}(t)=X_{12}(t))$}
    \begin{algorithmic}[1] 
    	\Require $\tilde{n}$ i.i.d.\ samples of $X$, denoted $\{X_i\}_{i=1}^{\tilde{n}}$ \Comment $\tilde{n} = 500$ was more than sufficient.
    	\Require the stoichiometry matrix $S$, and a finite $\tilde{N}\subset \text{null}(S)$
    	
    	\ForAll{$r$ in $1,\ldots,R$ and $i$ in $1, \ldots, \tilde{n}$ }
    		\State $\Lambda_{r,i}^{t-h,t} \gets \int_{t-h}^t \lambda_r(X_i(s))\,ds$
    	\EndFor 	
		\State
       	\State $\hat{P} \gets 0$
       	\ForAll{$k$ in $\tilde{N}$ and $i$ in $1, \ldots, \tilde{n}$}
    		\State $\hat{P} \gets \hat{P} + \prod_{r=1}^R P\left(\left.K_r^{t-h,t} = k_r \,\right|\, \Lambda_{r,i}^{t-h,t}\right)$
    		\Comment $K_r^{t-h,t} \sim \text{Skellam}(\Lambda_{r,i}^{t-h,t}, \Lambda_{r,i}^{t-h,t})$
    	\EndFor
       	
       	\State
       	\State $\hat{P}_\nu(X_{11}(t)=X_{12}(t)) \gets \hat{P}/\tilde n $
    \end{algorithmic}\label{alg:joint_probability_estimation}
\end{algorithm}

\subsection{Approximation to the optimization problem}
By using the joint probability approximation \cref{def:joint_prob_approx}, we can approximate the  function $f$ in the optimization problem \cref{def:bound_constrained_optimization_problem}.  In particular, let
\begin{align}
\label{def:f_hat}
\begin{split}
\hat{f}(m,h) \stackrel{\text{def}}{=}  &\left(\frac{1}{m}\int_0^{t-h}\!\!\!\!\!\! \bar{\lambda}_0(X(s))\,ds + \int_{t-h}^t \!\!\!\! \bar{\lambda}_0(X(s))\,ds\right)\\
&\left(1 + (m-1) \hat{P}_\nu(X_{11}(t)= X_{12}(t))- m \cancelto{0}{\sum_{x\in\mathbb{Z}_{\ge 0}^d} \!\! p_t^\nu(x)^2}\,\,\,\,\,\,\right),
\end{split}
\end{align}
where $\bar{\lambda}_0(X(s)) = \frac{1}{\tilde{n}}\sum_{i=1}^{\tilde{n}} \sum_{r=1}^R \lambda_r(X_i(s))$, and the $\{X_i\}_{i=1}^{\tilde{n}}$ are independent paths of $X$. Then we may substitute $f$ with $\hat f$ and our new optimization problem is the following:	
\begin{equation}\label{def:approx_unconstrained_optimization_problem}
\begin{split}
&\min_{m,h} \hat{f}(m,h)\\
&m\in\mathbb{R}_{\ge 1}, 0 \le h \le t.
\end{split}
\end{equation}
Note that above we have allowed $m$ to be real--valued, as opposed to integer valued.  This allows us to use continuous optimization algorithms, which generally converge more rapidly. According to Figure SM1, which shows $\hat{f}(m,h)$ for many values of $m$ and $h$, $\hat{f}$ does not change too quickly with $m$, so allowing $m$ to range over the reals instead of the integers should not change the optimal values of $m$ and $h$ appreciably.  

It is important to know when the optimization problem \cref{def:approx_unconstrained_optimization_problem} has a finite solution.  In the proposition below, we show that a solution necessarily exists when $\hat{P}_\nu(X_{11}(t)=X_{12}(t))$ is larger than the approximation used for $\sum_x p_t^\nu(x)^2$.  Since we approximate the sum with zero, we may conclude that a finite solution always exists in our setup.
\begin{proposition}\label{thm:finite_solution_to_optimization_problem}
Let $\widehat{p^2}$ be our approximation to $\sum_x p_t^{\nu}(x)^2$. If $\hat{P}_\nu(X_{11}(t)=X_{12}(t)) > \widehat{p^2}$ for all $h\in[0,t]$, then \cref{def:approx_unconstrained_optimization_problem} has a finite solution.
\begin{proof}
Since the integrals are nonnegative, $h$ is in a compact domain, $\hat f$ depends continuously on $h$ and $m$, and $\lim_{m\to\infty} \hat{f}(m,h) = \infty$, a finite solution exists. 
\end{proof}
\end{proposition}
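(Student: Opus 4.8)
The feasible region $\{m\ge 1,\ 0\le h\le t\}$ is closed but unbounded in the $m$-direction, so the extreme value theorem cannot be applied directly. The plan is to show that $\hat f$ is jointly continuous and then argue that the minimization may be confined to a bounded range of $m$, so that a minimizer is attained. Continuity is routine: writing $I_1(h)=\int_0^{t-h}\bar\lambda_0(X(s))\,ds$ and $I_2(h)=\int_{t-h}^t\bar\lambda_0(X(s))\,ds$, the integrand is a fixed nonnegative piecewise-constant function of $s$ determined by the $\tilde n$ sampled paths, so $I_1$ and $I_2$ are continuous in $h$; and $\hat P_\nu(X_{11}(t)=X_{12}(t))$ (abbreviated $\hat P$) is a finite sum of products of Skellam pmf values whose parameters $\Lambda_{r,i}^{t-h,t}$ depend continuously on $h$, hence is continuous as well. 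Therefore $\hat f$ is continuous in $(m,h)$.

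For the reduction, I would first rewrite the second factor of $\hat f$ as $(1-\hat P)+m\,\alpha$ with $\alpha(h)=\hat P-\widehat{p^2}$, and expand to
\[
\hat f(m,h)=\frac{I_1(h)\,(1-\hat P)}{m}+\Big(I_1(h)\,\alpha+I_2(h)(1-\hat P)\Big)+m\,I_2(h)\,\alpha .
\]
By hypothesis $\alpha>0$, and since $\hat P\le 1$ every coefficient here is nonnegative, so $\hat f\ge 0$. The crucial observation is that whenever $I_2(h)>0$ the linear term drives $\hat f(m,h)\to\infty$ as $m\to\infty$; because $I_1,I_2,\hat P$ are bounded on the compact interval $[0,t]$, this is the engine that should confine the search to bounded $m$ and hand the minimizer to the extreme value theorem.

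The hard part is the degenerate slice $I_2(h)=0$, in particular $h=0$, where conditional Monte Carlo reduces to classical Monte Carlo and coercivity in $m$ fails. I would handle it by observing that $I_2(h)=0$ forces each $\Lambda_{r,i}^{t-h,t}=0$, so every Skellam$(0,0)$ factor is a point mass at $0$ and $\hat P(h)=1$; consequently both the $1/m$ coefficient $I_1(h)(1-\hat P)$ and the linear coefficient $I_2(h)\,\alpha$ vanish, and $\hat f(\cdot,h)$ is constant in $m$ with value already attained at $m=1$. Combining the two cases closes the argument: any minimizing sequence has $h_k\to h_\ast$ by compactness of $[0,t]$, and if $m_k\to\infty$ then either $I_2(h_\ast)>0$, which forces the limit to be $+\infty$ and contradicts the finiteness of the infimum, or $I_2(h_\ast)=0$, in which case the limiting value equals $\hat f(1,h_\ast)$ and is therefore attained. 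Hence the infimum is finite and achieved, which is exactly the assertion once $\hat P>\widehat{p^2}$ (so that $\alpha>0$) holds for all $h$.
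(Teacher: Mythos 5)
Your proof is correct, and while its skeleton (continuity of $\hat f$, compactness in $h$, growth in $m$) matches the paper's one-line argument, you go a genuinely useful step further. The paper simply asserts that $\lim_{m\to\infty}\hat f(m,h)=\infty$, but as you observe this is false on the degenerate slice where $\int_{t-h}^{t}\bar\lambda_0(X(s))\,ds=0$ (in particular at $h=0$): there every $\Lambda_{r,i}^{t-h,t}$ vanishes, the Skellam factors collapse to point masses at zero, $\hat P_\nu(X_{11}(t)=X_{12}(t))=1$ (assuming $0\in\tilde N$, which holds for the paper's choice of $\tilde N$), and your decomposition of the second factor as $(1-\hat P)+m\alpha$ shows $\hat f(\cdot,h)$ is then constant in $m$ rather than coercive. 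Moreover, even where pointwise coercivity does hold, continuity plus pointwise coercivity on the non-compact strip $[1,\infty)\times[0,t]$ does not by itself hand you a minimizer; your minimizing-sequence argument, splitting on whether $I_2(h_\ast)>0$ or $I_2(h_\ast)=0$ and using nonnegativity of all four terms in the expansion to get $\liminf_k\hat f(m_k,h_k)\ge I_1(h_\ast)\alpha(h_\ast)=\hat f(1,h_\ast)$ in the degenerate case, supplies exactly the missing uniformity. In short, the paper's proof sketches the right idea but elides two real issues (the $I_2=0$ slice and the lack of uniform coercivity), and your write-up repairs both; the only hypothesis worth flagging explicitly is $0\in\tilde N$, which is needed for the claim $\hat P=1$ on the degenerate slice (if $0\notin\tilde N$ the conclusion still holds, since then $\hat P=0$ there and the hypothesis $\hat P>\widehat{p^2}$ already excludes that case).
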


\Cref{alg:cond_mc} outlines the full conditional Monte Carlo algorithm, which brings together all of the individual pieces of the algorithm that we previously discussed.

\begin{algorithm}
    \caption{Conditional Monte Carlo algorithm}
    \begin{algorithmic}[1] 
    	\Require $\tilde{n}$ i.i.d.\ samples of $X$, denoted $\{X_i\}_{i=1}^{\tilde{n}}$ \Comment $\tilde{n} = 500$ was more than sufficient.
  
       	\State $m,h \gets \argmin_{\substack{m\in\mathbb{R}_{\ge 1}\\0\le h\le t}} \hat{f}(m,h)$
       	\State
       	\Comment Use  $\{X_i\}_{i=1}^{\tilde{n}}$, \cref{def:f_hat}, and \cref{alg:joint_probability_estimation} to evaluate $\hat{f}$.
       	
       	\ForAll{ $i$ in $1, \ldots, n$}
    		\State Sample $X_i(t-h)$.
    		\Comment The $X_i(t-h)$ are i.i.d.
    		
    		\ForAll{ $j$ in $1, \ldots, m$}
    			\State Sample $X_{ij}(t)$ conditioned on $X_{ij}(t-h)=X_i(t-h)$.
    			\State
    			\Comment See \cref{sec:intro} for details about $X_{ij}$.
    		\EndFor
    	\EndFor
       	
       	\State
       	\State $\hat{p}_t^\nu(x;n,m,h) \gets \frac{1}{n} \sum_{i=1}^n \frac{1}{m} \sum_{j=1}^m \mathbbm{1}(X_{ij}(t)=x)$
       	\end{algorithmic}\label{alg:cond_mc}
\end{algorithm}

\section{Numerical results}\label{sec:numerical_results}


In this section, we present numerical results demonstrating the improvement in accuracy, quantified via the mean integrated squared error \eqref{def:MISE},  that comes from using our conditional Monte Carlo estimator instead of the classical Monte Carlo estimator. In particular, when near--optimal values of $m$ and $h$ are utilized, the accuracy often improves by an order of magnitude for a fixed computational budget. Moreover, we show that  the function $\hat f$ of \cref{def:approx_unconstrained_optimization_problem} is indeed a very good approximation for $f$ of \cref{def:bound_constrained_optimization_problem} for the examples we considered,  allowing us to conclude that the values of $m$ and $h$ our method produces are near--optimal.

The following steps were carried out on each of our test examples.  First, we fixed an integer $n_1$ and computed the classical Monte Carlo estimator
\begin{equation*}
p_t^{\text{MC}}(x;n_1) = \frac{1}{n_1}\sum_{i=1}^{n_1} \mathbbm{1}(X_i(t)=x),\, x \in \Z^d_{\ge 0}.
\end{equation*}
For all models, we used $n_1=10^4$. We also recorded the number of random variates used in generating $p_t^{\text{MC}}(\ \cdot\ ;n_1)$, which served as the budget $b$ in the computational cost constraint \cref{eq:constraint}. 

After obtaining $p_t^{\text{MC}}(\ \cdot\ ;n_1)$,  we computed the conditional Monte Carlo estimator
\begin{equation*}
p_t^{\text{CMC}}(x;n_2,m,h) = \frac{1}{n_2} \sum_{i=1}^{n_2} \frac{1}{m} \sum_{j=1}^m \mathbbm{1}(X_{ij}(t)=x),\, x \in \Z^d_{\ge 0},
\end{equation*}
for various pairs of $m$ and $h$, and $n_2$ was allowed to increase until the  conditional estimator used essentially the same number of random variates as the classical Monte Carlo estimator. All random variates generated for the conditional estimators were independent of those utilized for the classical estimator.

Next, for both classical and conditional Monte Carlo, we computed the integrated squared error
\begin{equation}\label{def:ise}
\text{ISE} = \sum_{\tilde{S}} \left(\hat{p}(x) - p_t^\nu(x) \right)^2,
\end{equation}
where $\tilde{S}$ was a large fixed subset of the state space, and $\hat{p}(x)$ was either the classical or conditional Monte Carlo estimate. The ISE is itself a random variable, and so we approximated the mean integrated square error (MISE) by averaging 100 independent samples of the ISE.

The exact values of $p_t^\nu(x)$ were unknown. Thus the values were estimated with conditional Monte Carlo with a large value of $n_1$ (we used $n_1 = 10^9$), and with $m$ and $h$ chosen so that they approximately minimize the MISE. 

\begin{figure}
\centering
\includegraphics[width=0.7\textwidth]{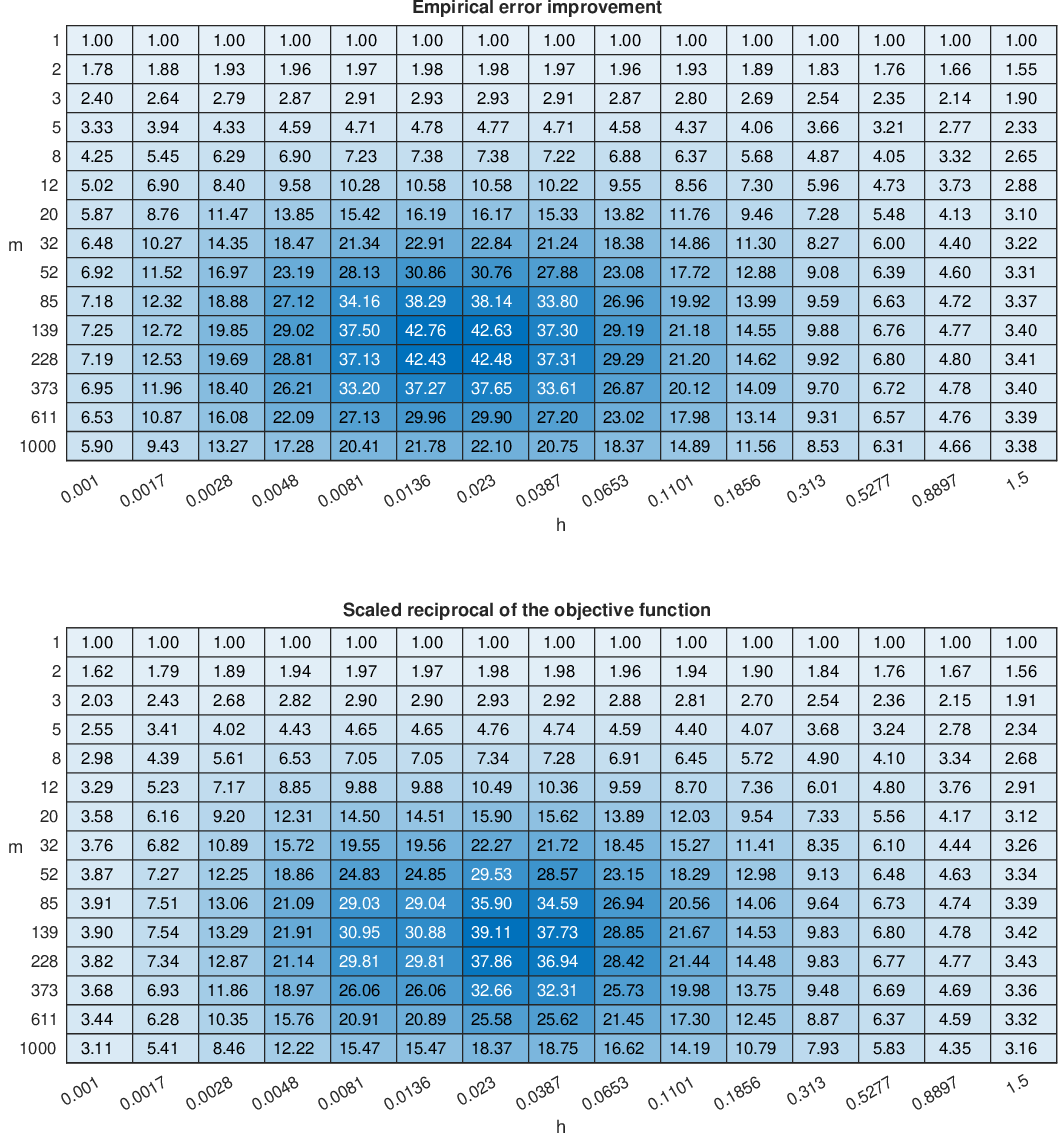}
\caption{Lotka-Volterra model. The first heatmap shows $\text{MISE}_\text{MC} / \text{MISE}_\text{CMC}(m,h)$ for different values of $m$ and $h$. The method we used to obtain the ratio is described in \cref{sec:numerical_results}. The second heatmap shows that value of $\hat{f}(1,0)/\hat{f}(m,h)$. The definition of $\hat{f}$ is given by \cref{def:f_hat}.} 
\label{fig:heatmap_lotka_volterra}
\end{figure}

\begin{figure}
\centering
\includegraphics[width=0.7\textwidth]{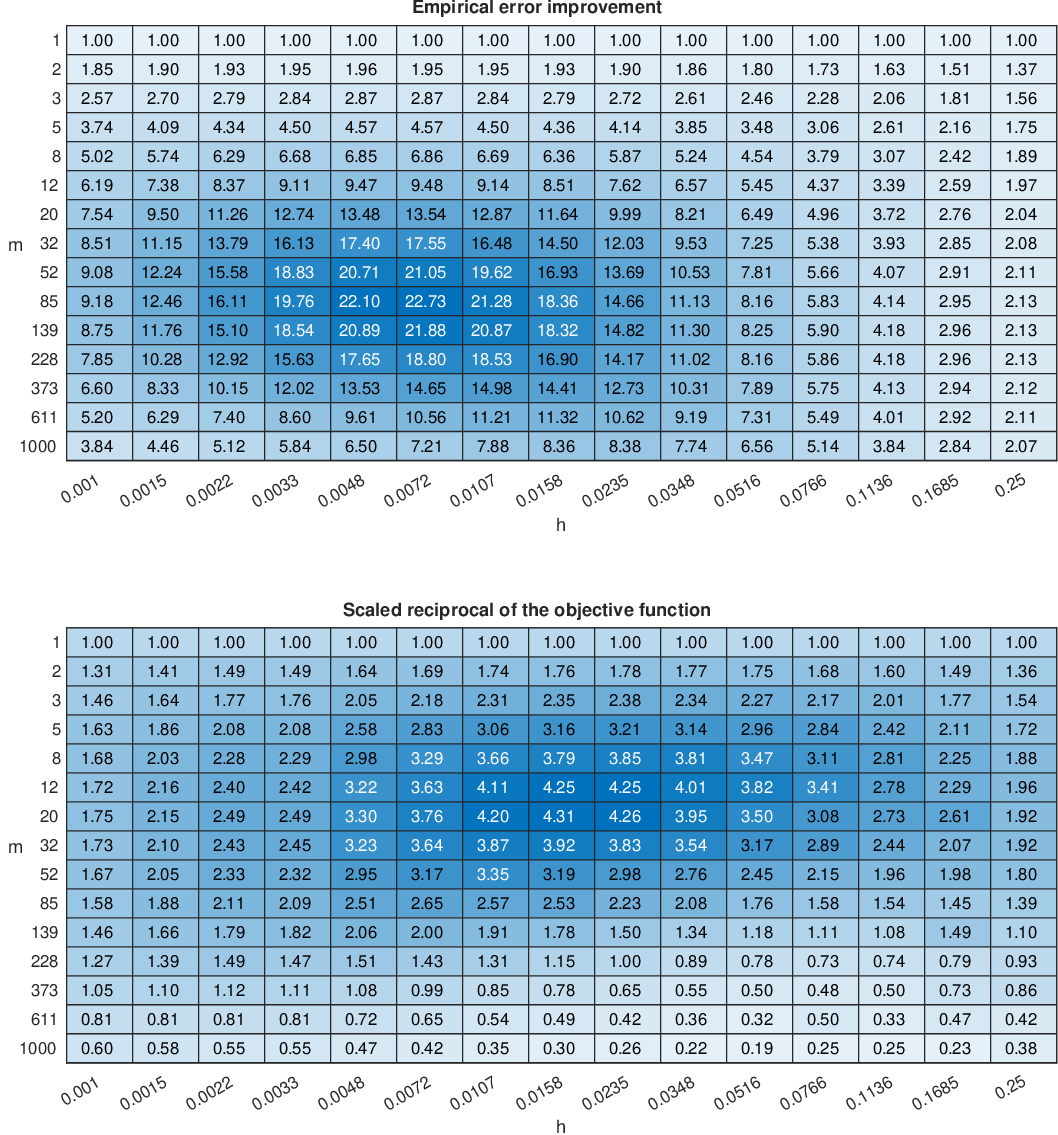}
\caption{Dimerization model. The first heatmap shows $\text{MISE}_\text{MC} / \text{MISE}_\text{CMC}(m,h)$ for different values of $m$ and $h$. The method we used to obtain the ratio is described in \cref{sec:numerical_results}. The second heatmap shows that value of $\hat{f}(1,0)/\hat{f}(m,h)$. The definition of $\hat{f}$ is given by \cref{def:f_hat}.}
\label{fig:heatmap_dimerization}
\end{figure}
Finally, we denote by $\text{MISE}_\text{MC}$  our estimate of the classical Monte Carlo MISE, and, for a given $m$ and $h$, we denote by  $\text{MISE}_\text{CMC}(m,h)$  the conditional version. For each model, and for each choice of $m$ and $h$, an ``empirical error improvement'' was computed as the following ratio   
\begin{equation*}
\frac{\text{MISE}_\text{MC}}{\text{MISE}_\text{CMC}(m,h)},
\end{equation*}
where a  number greater than one implies that conditional Monte Carlo has a lower MISE than classical Monte Carlo when given the same computational budget.
These values, one for each pair of $m$ and $h$,  can then be plotted.  
In the top half of \Cref{fig:heatmap_lotka_volterra,fig:heatmap_dimerization} (and Figures SM2 to SM5), we display these values with a heatmap.  Of particular interest is the order of magnitude improvement in computational efficiency we see with the conditional Monte Carlo estimator as compared to classical Monte Carlo \textit{when well--chosen values of $h$ and $m$ are utilized}.  In particular, for the Lotka-Volterra model we see a 40-fold improvement, for the dimerization model we see a 20-fold improvement, for the toggle model we see a 20-fold improvement, and for the fast/slow model we see a 20-fold improvement. For the birth and birth--death models we see more modest improvements in computational efficiency, but this can be explained by the simplicity of these models which makes classical Monte Carlo sufficient for the task at hand. In particular, one promising aspect of the present work comes into focus with these numerical results:  the more complicated the model, and the larger and more diffuse the distribution of the model (which is where other methods, including those that approximately solve the chemical master equation directly, struggle), the better the performance of the conditional Monte Carlo estimator.

In practice, we are not given the optimal values of the parameters $m$ and $h$, so we find them via the optimization problem \cref{def:approx_unconstrained_optimization_problem}.  In each of the bottom portions of \Cref{fig:heatmap_lotka_volterra,fig:heatmap_dimerization} (and Figures SM2 to SM5), we provide the values of $\hat f(m,h)$ for the different pairs of $m$ and $h$.  We report the inverse so that the heatmap will agree qualitatively with the top portion of the figures (higher values are desirable).  We also normalized the values by multiplying them by $\hat{f}(1,0)$, which does not affect the results of the optimization problem in any way. To generate each value  $1/\hat{f}(m,h)$ we first sampled $\tilde{n}=500$ paths, which then allowed us to compute $\bar{\lambda}_0$ and $\hat{P}_\nu(X_{11}(t)=X_{12}(t))$ as detailed in the previous section. We could then use these values to compute $\hat{f}(m,h)$ via \cref{def:f_hat}.

Note that the empirical error improvement and $\hat{f}$ do not need to have the same value for a pair of $m$ and $h$. The important thing is that the maximizer of the empirical error improvement is similar to the minimizer of $\hat{f}$. The heatmaps do indeed suggest that the true and approximate optimization problems have similar solutions. 
What is also clear from these numerical results is that  even if $m$ and $h$ slightly deviate from their optimal values, we still get a substantial improvement.  

We stress that such heatmaps do not need to be made by anyone who uses the conditional Monte Carlo algorithm. They are only used here to demonstrate that the optimization problem \cref{def:approx_unconstrained_optimization_problem} can be safely used to find the near--optimal values of $m$ and $h$, which can then be used to construct the desired estimator \cref{def:conditional_mc_estimator} via \cref{alg:cond_mc}.

\section{A central limit theorem}\label{sec:clt} In this section,  we will show how to obtain an approximate one-sided confidence interval for the integrated squared error \cref{def:ise} without running more simulations. Specifically, for a fixed (presumably large) finite subset of the state space $\tilde S$, a fixed $\alpha\in(0,1)$, and large $n$, we want to find a sequence of positive constants $\{C_n\}$ and a constant $u > 0$ such that
\begin{equation}\label{eqn:ise_confidence_interval}
\lim_{n\to\infty} P\Big(C_n \underbrace{\sum_{x\in\tilde{S}} \big(\hat{p}_t^\nu(x;n,m,h)-p_t^\nu(x)\big)^2}_{\text{integrated squared error}} \le u\Big) = 1-\alpha,
\end{equation}
where $C_n$ is allowed to depend on $m$ and $h$. The following central limit theorem will lead us to values for $\{C_n\}$ and $u$.

\begin{theorem}\label{thm:clt_integrated_mse}
Fix $m\in\Z_{\ge 1}$ and $h\in[0,t]$. Let $\mathcal{S}\subset \mathbb{Z}_{\ge 0}^d$ be the state space of the continuous time Markov chain, and let $\tilde{\mathcal{S}}$ be a finite subset of $\mathcal{S}$. Choose an enumeration of $\tilde{\mathcal{S}}$ and denote it $\{x_i\}_{i=1}^{|\tilde{\mathcal{S}}|}$. Let $p_t^\nu,\hat{p}_t^\nu \in\mathbb{R}^{|\tilde{\mathcal{S}}|}$ with  their $i$th elements equal to $p_t^\nu(x_i)$ and $\hat{p}_t^\nu(x_i;n,m,h)$, respectively. Let
\begin{equation}\label{def:sigma}
\Sigma \stackrel{\text{def}}{=} m\,\text{diag}(p_t^\nu) + m(m-1) A - m^2 p_t ^\nu (p_t^\nu)^T,
\end{equation}
where $\text{diag}(p_t^\nu)$ is the diagonal matrix with $p_t^\nu$ along its diagonal, and $A$ is a $|\tilde{\mathcal{S}}| \times |\tilde{\mathcal{S}}|$ matrix where $A_{ij} = P_\nu(X_{11}(t)=x_i,X_{12}(t)=x_j)$. Then
\vspace{-0.1in}
\begin{equation}\label{eqn:clt_integrated_mse}
nm^2 \sum_{x\in\tilde{\mathcal{S}}} \big(\hat{p}_t^\nu(x;n,m,h)-p_t^\nu(x)\big)^2 \stackrel{d}{\to} \sum_{\ell=1}^{|\tilde{S}|} \lambda_\ell Z_\ell^2 \text{, as } n\to\infty,
\end{equation}
where  $\{\lambda_\ell\}_{\ell=1}^{|\tilde{S}|}$ are the eigenvalues of $\Sigma$ and $Z_\ell\stackrel{\text{i.i.d.}}\sim N(0,1)$.
\end{theorem}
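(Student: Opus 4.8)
The plan is to recognize the estimator as a normalized sum of $n$ i.i.d.\ random vectors, apply the multivariate central limit theorem, and then push the resulting Gaussian limit through the squared-norm map. For each outer replicate $k \in \{1,\dots,n\}$, I would define the random vector $W_k \in \mathbb{R}^{|\tilde{\mathcal{S}}|}$ whose $i$th component is $(W_k)_i = \frac{1}{m}\sum_{j=1}^m \mathbbm{1}(X_{kj}(t)=x_i)$. By construction $\hat{p}_t^\nu = \frac{1}{n}\sum_{k=1}^n W_k$ as vectors in $\mathbb{R}^{|\tilde{\mathcal{S}}|}$, and the $W_k$ are i.i.d.\ because the branch families $\{X_{kj}\}_{j=1}^m$ are independent across $k$. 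Since each $X_{kj}(t)$ has the marginal law of $X(t)$ (tower property plus the Markov property, exactly as in \cref{def:estimator_with_cond}), we get $\mathbb{E}[W_k] = p_t^\nu$. The components of $W_k$ are bounded, so all moments needed for the CLT exist automatically, and no integrability hypothesis beyond non-explosivity (which guarantees $X(t)$ is an honest random variable) is required.

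The crux is computing $\Gamma \stackrel{\text{def}}{=} \text{Cov}(W_1)$ and matching it to $\Sigma$. Writing $\mathbb{E}[(W_1)_i (W_1)_j] = \frac{1}{m^2}\sum_{j_1,j_2} P_\nu(X_{1j_1}(t)=x_i,\, X_{1j_2}(t)=x_j)$, I would split the double sum into the $m$ diagonal pairs $j_1=j_2$ and the $m(m-1)$ off-diagonal pairs $j_1\neq j_2$. A diagonal pair contributes $p_t^\nu(x_i)\mathbbm{1}(i=j)$, since a single branch cannot simultaneously occupy two distinct states, while an off-diagonal pair contributes $A_{ij}=P_\nu(X_{11}(t)=x_i,X_{12}(t)=x_j)$ by exchangeability of the branches. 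Subtracting the outer product of the mean then gives, in matrix form, $\Gamma = \frac{1}{m}\diag(p_t^\nu) + \frac{m-1}{m}A - p_t^\nu (p_t^\nu)^T$, from which a direct comparison shows $\Sigma = m^2 \Gamma$.

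With the covariance identified, the multivariate CLT yields $\sqrt{n}\,(\hat{p}_t^\nu - p_t^\nu) \stackrel{d}{\to} G$ with $G\sim N(0,\Gamma)$, and the continuous mapping theorem applied to $v\mapsto m^2\|v\|^2$ gives $nm^2\sum_{x\in\tilde{\mathcal{S}}}(\hat{p}_t^\nu(x;n,m,h)-p_t^\nu(x))^2 \stackrel{d}{\to} m^2 G^T G$. Finally, diagonalizing $\Gamma = QDQ^T$ with $Q$ orthogonal, the vector $Q^T G$ has independent $N(0,\gamma_\ell)$ coordinates (the $\gamma_\ell$ being nonnegative since $\Gamma$ is a covariance matrix), so $m^2 G^T G = m^2\|Q^T G\|^2 = \sum_\ell m^2\gamma_\ell Z_\ell^2$ with $Z_\ell \stackrel{\text{i.i.d.}}\sim N(0,1)$; since the eigenvalues of $\Sigma=m^2\Gamma$ are exactly $\lambda_\ell = m^2\gamma_\ell$, this equals $\sum_\ell \lambda_\ell Z_\ell^2$, as claimed. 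I expect the covariance computation of the previous paragraph to be the only step requiring genuine care, since it is where the within-replicate branch correlations enter through $A$; the CLT and continuous-mapping steps are then routine.
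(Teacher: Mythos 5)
Your proposal is correct and follows essentially the same route as the paper: both identify the estimator as an average of $n$ i.i.d.\ bounded random vectors indexed by the outer replicate, compute the covariance by splitting the within-replicate double sum into diagonal and off-diagonal branch pairs, apply the multivariate CLT, and pass to the quadratic form via continuous mapping (the paper cites Box's 1954 theorem for the final eigenvalue decomposition, which you instead carry out directly by diagonalizing the covariance; your vectors are the paper's $M_i$ rescaled by $1/m$, which only shifts where the factor $m^2$ appears). No gaps.
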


$\Sigma$ is usually an enormous matrix, so we do not want to store it, much less compute its eigenvalues. The Satterthwaite approximation \cite{satterthwaite1941} says that
\begin{equation}\label{eqn:satterthwaite_approx}
\sum_\ell \lambda_\ell Z_\ell^2 \stackrel{d}\approx \frac{\sum_\ell \lambda_\ell^2}{\sum_{\ell}\lambda_\ell} \chi^2 \left(\frac{\left(\sum_{\ell}\lambda_\ell \right)^2}{\sum_{\ell} \lambda_\ell^2} \right) = \frac{\tr{\Sigma^2}}{\tr{\Sigma}} \chi^2 \left(\frac{\tr{\Sigma}^2}{\tr{\Sigma^2}} \right),
\end{equation}
where $\chi^2(v)$ denotes a $\chi^2$ random variable with $v$ degrees of freedom. The approximation is obtained by matching the first two moments of the linear combination (above left-hand side) and the chi-squared distribution (above right-hand side). The advantage of the approximation is that we can estimate $\tr{\Sigma}$ and $\tr{\Sigma^2}$ without storing $\Sigma$ explicitly or computing its eigenvalues.

\begin{theorem}\label{thm:traces}
Fix $n,m\in\mathbb{Z}_{\ge 1}$ and $h\in[0,t]$. Let $\tilde{\mathcal{S}}$, $\{x_k\}_{k=1}^{|\tilde{\mathcal{S}}|}$, and $\hat{p}_t^\nu$ be defined as in \cref{thm:clt_integrated_mse}. For $1 \le i \le n$, let $M_i\in\Z_{\ge 0}^{|\tilde{\mathcal{S}}|}$, and set its $k$th element to $M_i(x_k) \stackrel{\text{def}}{=} \sum_{j=1}^m \mathbbm{1}(X_{ij}=x_k)$ (the $\{X_{ij}\}$ are defined in \cref{sec:intro}). Let $\hat{\Sigma}_n$ be the usual sample covariance matrix of $\{M_i\}_{i=1}^n$. Specifically,
\begin{equation*}
\hat{\Sigma}_n \stackrel{\text{def}}{=} \frac{1}{n-1} \sum_{i=1}^n \left(M_i - \overline{M}\right) \left(M_i - \overline{M}\right)^T,
\end{equation*}
where $\overline{M} = n^{-1}\sum_{i=1}^n M_i$. Then
\begin{equation}\label{eqn:tr_sample_cov}
\tr{\hat{\Sigma}_n} = \frac{1}{n-1} \sum_{i=1}^n M_i^T M_i - \frac{nm^2}{n-1} (\hat{p}_t^\nu)^T \hat{p}_t^\nu,
\end{equation}
and
\begin{multline}\label{eqn:tr_sample_cov_sq}
\tr{\hat{\Sigma}_n^2} = \frac{1}{(n-1)^2} \sum_{i=1}^n \left[M_i^T M_i - 2\overline{M}^T M_i  + m^2 (\hat{p}_t^\nu)^T \hat{p}_t^\nu \right]^2 \\
+ \frac{2}{(n-1)^2} \sum_{1\le i < j\le n}\left[M_i^T M_j - \overline{M}^T M_i - \overline{M}^T M_j  + m^2 (\hat{p}_t^\nu)^T \hat{p}_t ^\nu \right]^2.
\end{multline}
Furthermore
\begin{equation*}
\tr{\hat{\Sigma}_n} \stackrel{\text{a.s.}}{\to} \tr{\Sigma} \text{ and } \tr{\hat{\Sigma}_n^2} \stackrel{\text{a.s.}}{\to} \tr{\Sigma^2} \text{ as } n\to\infty.
\end{equation*}
\end{theorem}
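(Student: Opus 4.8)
The plan is to dispatch the two closed-form trace identities by direct algebraic expansion, and then to obtain the almost sure convergence by recognizing $\Sigma$ as the population covariance matrix of a single $M_i$ and invoking the strong law of large numbers. The one fact that ties everything together is that $\overline{M} = n^{-1}\sum_{i=1}^n M_i = m\,\hat{p}_t^\nu$, which follows immediately from the definitions of $M_i(x_k)$ and of the conditional Monte Carlo estimator \cref{def:conditional_mc_estimator}; every occurrence of $\overline{M}^T\overline{M}$ can then be rewritten as $m^2(\hat{p}_t^\nu)^T\hat{p}_t^\nu$.

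For \cref{eqn:tr_sample_cov}, I would write $\tr{\hat{\Sigma}_n} = \frac{1}{n-1}\sum_{i=1}^n (M_i-\overline{M})^T(M_i-\overline{M})$, expand the quadratic, and use $\sum_{i=1}^n M_i^T\overline{M} = n\,\overline{M}^T\overline{M}$ to collapse the cross term, leaving $\frac{1}{n-1}\sum_i M_i^T M_i - \frac{n}{n-1}\overline{M}^T\overline{M}$; substituting $\overline{M}^T\overline{M} = m^2(\hat{p}_t^\nu)^T\hat{p}_t^\nu$ gives the claim. For \cref{eqn:tr_sample_cov_sq}, I would set $D_i = M_i - \overline{M}$ so that $\hat{\Sigma}_n = \frac{1}{n-1}\sum_i D_iD_i^T$ and $\hat{\Sigma}_n^2 = \frac{1}{(n-1)^2}\sum_{i,j}D_iD_i^TD_jD_j^T$. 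Cyclicity of the trace gives $\tr{D_iD_i^TD_jD_j^T} = (D_i^TD_j)(D_j^TD_i) = (D_i^TD_j)^2$, hence $\tr{\hat{\Sigma}_n^2} = \frac{1}{(n-1)^2}\sum_{i,j}(D_i^TD_j)^2$. Splitting the double sum into its diagonal ($i=j$) and off-diagonal ($i<j$, doubled) parts and expanding $D_i^TD_j = M_i^TM_j - \overline{M}^TM_i - \overline{M}^TM_j + \overline{M}^T\overline{M}$ (with $\overline{M}^T\overline{M}$ again replaced by $m^2(\hat{p}_t^\nu)^T\hat{p}_t^\nu$) yields exactly the two sums in the statement.

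For the convergence, the crucial step is to verify that $\Sigma$ of \cref{def:sigma} is precisely $\mathrm{Cov}(M_1)$. Since each branch $X_{1j}(t)$ carries the marginal law of $X(t)$ under $\nu$, we get $\mathbb{E}[M_1] = m\,p_t^\nu$. For the second moment I would expand $M_1(x_k)M_1(x_\ell) = \sum_{j,j'}\mathbbm{1}(X_{1j}=x_k)\mathbbm{1}(X_{1j'}=x_\ell)$ and split on $j=j'$ versus $j\neq j'$: the $m$ diagonal terms contribute $m\,\delta_{k\ell}\,p_t^\nu(x_k)$, while each of the $m(m-1)$ off-diagonal terms contributes $P_\nu(X_{11}(t)=x_k,X_{12}(t)=x_\ell) = A_{k\ell}$, because any two distinct branches share the joint law of $(X_{11},X_{12})$. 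Thus $\mathbb{E}[M_1M_1^T] = m\,\text{diag}(p_t^\nu) + m(m-1)A$, and subtracting $\mathbb{E}[M_1]\mathbb{E}[M_1]^T = m^2 p_t^\nu(p_t^\nu)^T$ recovers $\Sigma$ exactly. Finally, because $\tilde{\mathcal{S}}$ is finite and each $M_i(x_k)$ is a sum of $m$ indicators, the $M_i$ are i.i.d.\ and bounded, so the strong law of large numbers gives $n^{-1}\sum_i M_iM_i^T \to \mathbb{E}[M_1M_1^T]$ and $\overline{M}\to\mathbb{E}[M_1]$ almost surely; writing $\hat{\Sigma}_n = \frac{1}{n-1}\sum_i M_iM_i^T - \frac{n}{n-1}\overline{M}\,\overline{M}^T$ and passing to the limit yields $\hat{\Sigma}_n\to\Sigma$ entrywise, and continuity of $A\mapsto\tr{A}$ and $A\mapsto\tr{A^2}$ on this fixed-dimensional space finishes.

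I do not anticipate a genuine obstacle: the only place demanding care is the second-moment computation, where one must correctly use the shared-path conditional structure to identify the off-diagonal contributions with the matrix $A$ rather than with a product of marginals. The remaining work is bookkeeping in the two trace expansions.
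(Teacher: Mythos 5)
Your proposal is correct and follows essentially the same route as the paper: cyclic invariance of the trace to reduce both identities to inner products of the centered vectors $M_i-\overline{M}$, the substitution $\overline{M}=m\,\hat{p}_t^\nu$, and the continuous mapping theorem applied to $\hat{\Sigma}_n\stackrel{\text{a.s.}}{\to}\Sigma$. The only difference is that you re-derive the identification $\Sigma=\mathrm{Cov}(M_1)$ and spell out the strong law of large numbers step, which the paper has already established in the proof of \cref{thm:clt_integrated_mse} and therefore takes for granted here.
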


For the models we tested, the optimal value of $m$ was only moderately large (on the order of 10 to 100), and the indicator in the summand of $M_i(x)$ is zero for many values of $x$. Whenever those two conditions hold, $M_i$ sparse. Consequently, storing $\{M_i\}_{i=1}^n$ does not require too much memory, and the terms $M_i^T M_j$ and $\overline{M}^T M_i$ are  cheap to compute. \Cref{alg:trace_estimation} summarizes how we compute the traces. Using the sparsity of the $M_i$ is important, because otherwise the vectors are too large to store and the operations are slow.

\begin{algorithm}
    \caption{Algorithm for computing $\hat{p}_t^\nu, \tr{\hat{\Sigma}_n}$, and $\tr{\hat{\Sigma}^2_n}$ }
    \begin{algorithmic}[1] 
    	\Require $n,m\in\mathbb{Z}_{\ge 1}$ and $h\in[0,t]$
		\For{$i$ in $\{1,\ldots, n\}$}
			\State Sample $X_i(t-h)$.
			\State Given $X_i(t-h)$, sample $\{X_{ij}(t)\}_{j=1}^m$.
			\For{$x$ in $\tilde{S}$}
				\State $M_i(x) \gets \sum_{j=1}^m \mathbbm{1}(X_{ij}(t)=x)$ \Comment{Store $M_i$ as a sparse vector.}
			\EndFor
		\EndFor
		
		\State
		\State $\hat{p}_t^\nu \gets \frac{1}{nm} \sum_{i=1}^n M_i$
		\State Compute $\tr{\hat{\Sigma}_n}$ according to \cref{eqn:tr_sample_cov}.
		\State Compute $\tr{\hat{\Sigma}^2 _n}$ according to \cref{eqn:tr_sample_cov_sq}.
    \end{algorithmic}\label{alg:trace_estimation}
\end{algorithm}

\begin{corollary}\label{thm:upper_confidence_bound}
Fix $n,m\in\mathbb{Z}_{\ge 1}$ and $h\in[0,t]$. Also fix an $\alpha\in(0,1)$, and let $\chi_\alpha^2(v)$ be the $1-\alpha$ quantile of the $\chi^2$ distribution with $v$ degrees of freedom. An approximate $1-\alpha$ confidence interval for $\sum_{x\in\tilde{\mathcal{S}}} \big(\hat{p}_t^\nu(x;n,m,h)-p_t^\nu(x)\big)^2$ is $[0,U_n/(nm^2)]$, where
\begin{equation}\label{def:upper_confidence_bound}
U_n \stackrel{\text{def}}{=} \frac{\tr{\hat{\Sigma}_n^2}}{\tr{\hat{\Sigma}_n}}\chi_\alpha^2\left(\frac{\tr{\hat{\Sigma}_n}^2}{\tr{\hat{\Sigma}_n^2}} \right).
\end{equation}
\end{corollary}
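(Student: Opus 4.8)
The plan is to assemble the three preceding results into a single Slutsky-type limit. Throughout write
\[
T_n \stackrel{\text{def}}{=} nm^2 \sum_{x\in\tilde{\mathcal{S}}} \big(\hat{p}_t^\nu(x;n,m,h)-p_t^\nu(x)\big)^2
\quad\text{and}\quad
W \stackrel{\text{def}}{=} \sum_{\ell=1}^{|\tilde{S}|} \lambda_\ell Z_\ell^2,
\]
so that the event whose probability we must control is exactly $\{T_n \le U_n\}$, since the integrated squared error is nonnegative and the proposed interval is $[0,U_n/(nm^2)]$. \Cref{thm:clt_integrated_mse} already gives $T_n \stackrel{d}{\to} W$, and because $\Sigma$ is a (scaled) covariance matrix it is positive semidefinite, so each $\lambda_\ell \ge 0$ and $W$ is a genuine nonnegative linear combination of independent $\chi^2_1$ variables.

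First I would turn the threshold $U_n$ into a continuous function of the two sample traces. Define $g(a,b) \stackrel{\text{def}}{=} \frac{b}{a}\,\chi_\alpha^2\!\left(a^2/b\right)$ for $a,b>0$, so that $U_n = g\big(\tr{\hat{\Sigma}_n}, \tr{\hat{\Sigma}_n^2}\big)$ and, setting $U \stackrel{\text{def}}{=} g\big(\tr{\Sigma},\tr{\Sigma^2}\big)$, this $U$ is the deterministic threshold built from the true covariance. The chi-square quantile $\chi_\alpha^2(v)$ is continuous in its (possibly non-integer) degrees-of-freedom argument $v>0$, and by Cauchy--Schwarz the argument $a^2/b$ stays in $[1,|\tilde S|]$, so $g$ is continuous on a neighborhood of $\big(\tr{\Sigma},\tr{\Sigma^2}\big)$ provided $\tr{\Sigma}>0$. \Cref{thm:traces} gives $\tr{\hat{\Sigma}_n}\to\tr{\Sigma}$ and $\tr{\hat{\Sigma}_n^2}\to\tr{\Sigma^2}$ almost surely, so the continuous mapping theorem yields $U_n \to U$ almost surely, hence in probability.

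Next I would combine the two modes of convergence. Since $T_n \stackrel{d}{\to} W$ and $U_n$ converges in probability to the constant $U$, Slutsky's theorem gives $T_n - U_n \stackrel{d}{\to} W - U$. The law of $W$ is absolutely continuous (a nondegenerate positive semidefinite quadratic form in independent Gaussians has no atoms), so $0$ is a continuity point of the distribution of $W-U$, and therefore
\[
P\big(T_n \le U_n\big) = P\big(T_n - U_n \le 0\big) \;\longrightarrow\; P\big(W \le U\big).
\]
Finally, invoking the Satterthwaite approximation \cref{eqn:satterthwaite_approx}, which replaces the law of $W$ by that of $\frac{\tr{\Sigma^2}}{\tr{\Sigma}}\chi^2\!\big(\tr{\Sigma}^2/\tr{\Sigma^2}\big)$, we obtain
\[
P\big(W \le U\big) \approx P\!\left(\chi^2\!\left(\frac{\tr{\Sigma}^2}{\tr{\Sigma^2}}\right) \le \chi_\alpha^2\!\left(\frac{\tr{\Sigma}^2}{\tr{\Sigma^2}}\right)\right) = 1-\alpha,
\]
which is the asserted coverage. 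This last line is the only place the word ``approximate'' enters: every limiting step above is exact, and the sole approximation is the Satterthwaite matching of the first two moments (encoded by the traces) of $W$ to a scaled $\chi^2$.

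The main obstacle is the third step, namely justifying $P(T_n \le U_n) \to P(W \le U)$ when the comparison threshold $U_n$ is itself random. This is precisely where \Cref{thm:traces} is needed: reducing $U_n$ to the deterministic limit $U$ in probability lets Slutsky's theorem apply, and the absence of atoms in the law of $W$ guarantees no probability mass is lost at the boundary point $0$. The only genuine regularity hypothesis is $\tr{\Sigma}>0$ (equivalently $\Sigma \neq 0$), which holds whenever the conditional Monte Carlo estimator has nondegenerate variance, as it does in every nontrivial example.
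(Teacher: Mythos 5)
Your proposal is correct and follows essentially the same route as the paper: show $U_n \to U$ almost surely via \cref{thm:traces} and continuity of the scaled chi-square quantile, apply Slutsky's theorem together with \cref{thm:clt_integrated_mse}, and finish with the Satterthwaite approximation. The only substantive difference is that you assert the continuity of $\chi^2_\alpha(v)$ in the degrees-of-freedom argument, which the paper justifies separately via \cref{thm:un_to_u}, while you add an explicit (and welcome) remark that the limit law $W$ has no atoms so the boundary is a continuity point.
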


\Cref{fig:clt_lotka_volterra,fig:clt_dimerization} (and also Figures SM6 to SM9), compare the empirical distribution of
\begin{equation}\label{def:normalized_ise}
nm^2\sum_{x\in\tilde{\mathcal{S}}} \big(\hat{p}_t^\nu(x;n,m,h)-p_t^\nu(x)\big)^2
\end{equation}
to the approximate asymptotic distribution \cref{eqn:satterthwaite_approx}, where the true traces are replaced with the sample traces from \cref{alg:trace_estimation}. The figures also compare the sample 95\% quantile to the same quantile based on \cref{thm:upper_confidence_bound}, which turned out to be close.

\begin{figure}\label{fig:clt}
	\centering
    \subfloat[Lotka-Volterra model.\label{fig:clt_lotka_volterra}]{{\includegraphics[width=0.6\textwidth]{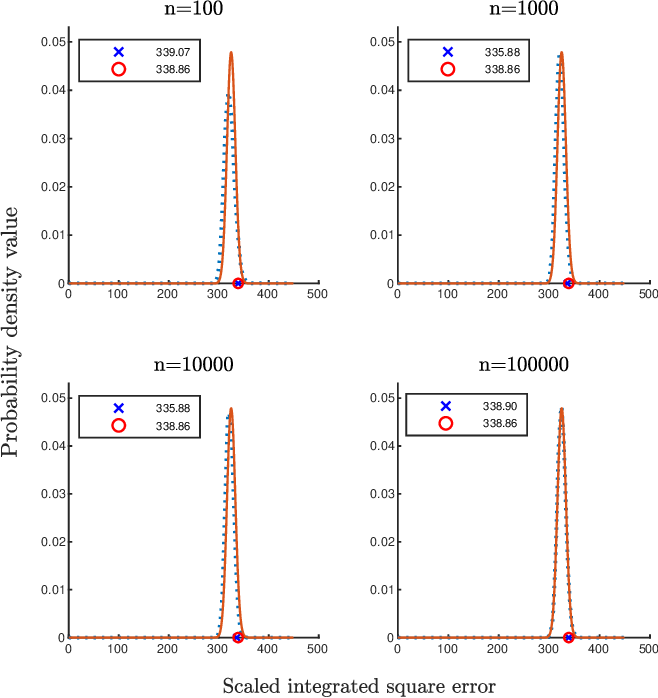}}}	
    \qquad
    \subfloat[Dimerization model.\label{fig:clt_dimerization}]{{\includegraphics[width=0.6\textwidth]{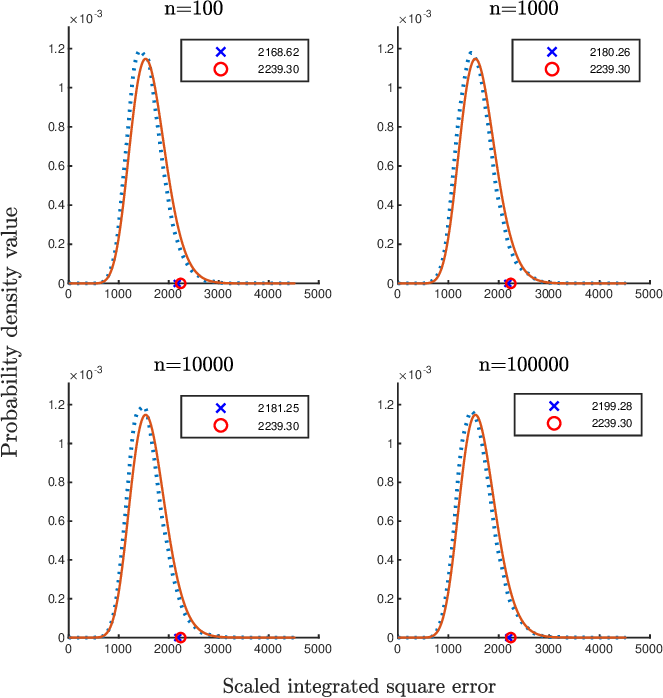}}}
    \caption{The dashed blue density is the empirical density of the integrated squared error \cref{def:normalized_ise}, whereas the solid red density is the Satterwaithe approximation to the asymptotic density \cref{eqn:satterthwaite_approx}. The blue cross and red circle are the 95\% quantiles of their respective densities. To generate the blue curve, first we sampled $10^4$ values of $nm^2\sum_{x\in\tilde{\mathcal{S}}} \big(\hat{p}_t^\nu(x;n,m,h)-p_t^\nu(x)\big)^2$ (which we call the ``scaled integrated squared error'') for different values of $n$. Given those samples, we used MATLAB's \textit{ksdensity} function to generate the blue curve. The traces of $\Sigma$ and $\Sigma^2$ were estimated with an independent set of $10^5$ simulations and \cref{alg:trace_estimation}.}
\end{figure}

\section{Directions for future research}\label{sec:conclusion}
We demonstrated how to implement a version of conditional Monte Carlo in the context of continuous time Markov chain models for reaction networks. There are many possible directions for future research; we list three.
\begin{enumerate}
\item The method could be extended so it provides estimates of the distribution at multiple fixed time-points. The method we developed, and in particular the optimization problem we utilize to find the values of $m$ and $h$, is tailored to the single time-point case.


\item In the method developed here the conditional expectation in \cref{def:estimator_with_cond}
\[
	\mathbb{E}_{X_i(t-h),t-h}\left[\mathbbm{1}(X(t)=x) \right]
\]
is approximated by Monte Carlo with $m$ conditionally independent realizations. However, it could be approximated by solving the chemical master equation directly, perhaps via the finite state projection algorithm \cite{munsky2006finite}. Because the solver need only integrate the system of ODEs over the time interval $[t-h,t]$, the probability mass should not become too diffuse, thereby solving one of the major difficulties related to these solvers.
 
We implemented this approach and observed some increase in efficiency over the conditional Monte Carlo algorithm \cref{alg:cond_mc}, around a factor of three.  However, the gains were only realized when an optimal value of $h$ was chosen, and we needed to test many different $h$ values in order to find  the optimal value. In practice, we would need a faster method for finding the optimal parameters, similar to the optimization problem detailed in this paper.

\item As discussed in the introduction and  \cref{sec:expectations}, the present method is not optimized for the estimation of expectations.  Developing a new conditional Monte Carlo estimator tailored to that problem is a natural focus of future work.
\end{enumerate}

%

%
%
%
%
%

\appendix
\section{Proofs}\label{sec:additional_proofs}

\subsection{Theorem regarding the expected number of reactions}
\begin{theorem}\label{thm:expected_number_reactions}
Suppose that the process $X$ is non-explosive and fix $h\in[0,t]$ and $m\in\Z_{\ge 1}$.  Then the expected number of reactions required to sample $\{X_{1j}\}_{j=1}^m$ is
\begin{equation*}
\mathbb{E}_{\nu,0}\left[\int_0^{t-h}\!\!\!\!\!\! \lambda_0(X(s))\,ds\right] + m\, \mathbb{E}_{\nu,0}\left[\int_{t-h}^t \!\!\!\! \lambda_0(X(s))\,ds\right].
\end{equation*}
\end{theorem}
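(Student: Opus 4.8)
The plan is to decompose the total reaction count by time interval and by branch, and then to compute the expectation of each piece using the random time change representation \cref{def:ctmc_random_time_change} together with the Markov property. Writing $\Lambda_r(s)=\int_0^s \lambda_r(X(u))\,du$ for the internal clock of reaction $r$, I would first record that the number of reactions $N$ needed to generate $\{X_{1j}\}_{j=1}^m$ splits as
\[
N \;=\; N_{[0,t-h]} \;+\; \sum_{j=1}^m N^{(j)}_{[t-h,t]},
\]
where $N_{[0,t-h]}$ counts the reactions fired on the common segment $X_1$ over $[0,t-h]$ (counted only once, since all $m$ branches share it) and $N^{(j)}_{[t-h,t]}$ counts the reactions of the $j$th branch $X_{1j}$ over $[t-h,t]$. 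By \cref{def:ctmc_random_time_change}, each of these is a sum over $r$ of increments of the driving unit-rate Poisson processes evaluated at the internal clocks; in particular $N_{[0,t-h]}=\sum_{r=1}^R Y_r(\Lambda_r(t-h))$.

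The crux is the identity
\[
\mathbb{E}_{\nu,0}\!\left[Y_r(\Lambda_r(s))\right] \;=\; \mathbb{E}_{\nu,0}\!\left[\Lambda_r(s)\right], \qquad 0\le s\le t,
\]
which I would establish from the local-martingale property of the compensated counting process $M_r(\cdot)=Y_r(\Lambda_r(\cdot))-\Lambda_r(\cdot)$. I would localize by the successive jump times $\sigma_k=\inf\{u:N(u)\ge k\}$. Along this sequence $Y_r(\Lambda_r(s\wedge\sigma_k))\le k$ is bounded, while $\Lambda_r(s\wedge\sigma_k)\le \int_0^{\sigma_k}\lambda_0(X(u))\,du$ is a sum of at most $k$ independent unit-mean exponential increments (the interjump time in state $x$ is exponential with rate $\lambda_0(x)$, so each increment $\lambda_0(x)\,T$ of the total internal clock has mean one), hence integrable. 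Optional stopping then gives $\mathbb{E}_{\nu,0}[M_r(s\wedge\sigma_k)]=0$, i.e.\ $\mathbb{E}_{\nu,0}[Y_r(\Lambda_r(s\wedge\sigma_k))]=\mathbb{E}_{\nu,0}[\Lambda_r(s\wedge\sigma_k)]$. Non-explosivity guarantees $\sigma_k\to\infty$ almost surely, so letting $k\to\infty$ and applying monotone convergence to both nonnegative sides yields the identity (as an equality in $[0,\infty]$).

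Summing this identity over $r$ and using $\lambda_0=\sum_r\lambda_r$ handles the common segment:
\[
\mathbb{E}_{\nu,0}\!\left[N_{[0,t-h]}\right] \;=\; \mathbb{E}_{\nu,0}\!\left[\int_0^{t-h}\!\!\lambda_0(X(s))\,ds\right].
\]
For the branches I would condition on $X_1(t-h)$. By the Markov property, given $X_1(t-h)=y$ each branch is a fresh copy of the process started from $y$ at time $t-h$, so the same identity applied on $[t-h,t]$ gives $\mathbb{E}_{y,t-h}[N_{[t-h,t]}]=\mathbb{E}_{y,t-h}[\int_{t-h}^t \lambda_0(X(s))\,ds]$; since the $m$ branches are conditionally i.i.d., the tower property returns $m\,\mathbb{E}_{\nu,0}[\int_{t-h}^t \lambda_0(X(s))\,ds]$. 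Adding the two contributions gives the stated formula.

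The main obstacle is the rigorous justification of $\mathbb{E}_{\nu,0}[Y_r(\Lambda_r(s))]=\mathbb{E}_{\nu,0}[\Lambda_r(s)]$: because the argument of $Y_r$ is itself built from the path of $X$, the Poisson process and its random time change are dependent, so one cannot simply invoke $\mathbb{E}[Y_r(\tau)]=\tau$ for a deterministic $\tau$; the localization-and-optional-stopping argument, with non-explosivity ensuring $\sigma_k\to\infty$, is the delicate point. Everything else is bookkeeping via additivity of the Poisson increments and the Markov and tower properties.
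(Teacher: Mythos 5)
Your proposal is correct and follows essentially the same route as the paper: decompose the reaction count via the random time change representation and use the fact that $Y_r(\int_0^{\cdot}\lambda_r(X(s))\,ds) - \int_0^{\cdot}\lambda_r(X(s))\,ds$ is a martingale, so the expected number of jumps equals the expected integrated intensity. The only difference is that the paper cites this martingale property directly (\cite[Theorem 1.22]{AKbook}) while you rederive it by localizing at the jump times and applying optional stopping and monotone convergence, which is a legitimate filling-in of the cited step rather than a different argument.
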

\begin{proof}
The number of reactions required to sample $\{X(s)\}_{s\in[a,b]}$ is
\begin{equation*}
\sum_{r=1}^R \left[Y_r\left(\int_0^b \lambda_r\left(X(s)\right)\,ds\right)- Y_r\left(\int_0^a \lambda_r\left(X(s)\right)\,ds\right)\right],
\end{equation*}
where the $Y_r$ are independent unit-rate Poisson processes \cite{kurtz1980representations}. For each $r$,
\begin{equation*}
Y_r\left(\int_0^t \lambda_r\left(X(s)\right)\,ds\right) - \int_0^t \lambda_r\left(X(s)\right)\,ds
\end{equation*}
is a martingale \cite[Theorem 1.22]{AKbook}, so the result follows.
\end{proof}

\subsection{Proof of \texorpdfstring{\cref{thm:MISE_simplification}}{MISE simplification}}
\label{sec:proofThm31}

For simplicity, denote $X_{ij}(t)$ as $X_{ij}$. We start with the left-hand side of the desired equality. The monotone convergence theorem implies that we can move the expectation inside the sum, by which we mean
\begin{align*}
\mathbb{E}_{\nu,0}\left[\sum_x \big(\hat{p}_t^\nu(x;n,m,h)-p_t^\nu(x)\big)^2 \right] &= \sum_x \mathbb{E}_{\nu,0}\left[ \big(\hat{p}_t^\nu(x;n,m,h)-p_t^\nu(x)\big)^2 \right]\\
&= \sum_x \text{Var}[\hat{p}_t^\nu(x;n,m,h)].
\end{align*}
The last line follows from the fact that the estimator $\hat{p}_t^{\nu}$ is unbiased. From the definition of $\hat{p}_t^{\nu}$, and also basic properties of variance, the above is equal to
\begin{align*}
&= \sum_x \text{Var}\left[\frac{1}{nm}\sum_{i=1}^n\sum_{j=1}^m \mathbbm{1}(X_{ij}=x) \right]\\
&= \frac{1}{nm^2}  \sum_x \left[\sum_{j=1}^m \text{Var}[\mathbbm{1}(X_{1j}=x)]  + 2 \!\!\!\! \sum_{1\le i < j \le m} \!\!\!\! \text{Cov}\big(\mathbbm{1}(X_{1i}=x), \mathbbm{1}(X_{1j}=x)\big)\right]\\
&= \frac{1}{nm^2}  \sum_x \left[m \text{Var}[\mathbbm{1}(X_{11}=x)] + m(m-1) \text{Cov}(\mathbbm{1}(X_{11}=x), \mathbbm{1}(X_{12}=x))\right]\\
&= \frac{1}{nm}\sum_x \left[p_t^\nu(x)(1\!-\!p_t^\nu(x))+ (m\!-\!1) \Big(\mathbb{E}_{\nu,0}\left[\mathbbm{1}(X_{11}=x)\mathbbm{1}(X_{12}=x)\right] - p_t^\nu(x)^2\Big) \right]\\
&= \frac{1}{nm}\sum_x \left[p_t^\nu(x)+ (m-1) P_\nu(X_{11}=x,X_{12}=x) - mp_t^\nu(x)^2 \right]\\\
&= \frac{1}{n}\left[\frac{1}{m} + \left(1-\frac{1}{m}\right) P_\nu(X_{11}=X_{12})  - \sum_x p_t^\nu(x)^2 \right].
\end{align*}
We can also take $p_t^\nu(x)$ to be a marginal distribution. In that case, interpret sums over $x$ as sums over the lower-dimensional marginal variables. Also, view $X_{11}=X_{12}$ as being true if their coordinates corresponding to the marginal variables are equal.

\subsection{Proof of \texorpdfstring{\cref{thm:skellam_approx}}{Skellam approximation of the joint probability}}
\label{sec:proofThm32}
Let $\Lambda^{0,t}\in\mathbb{R}_{\ge 0}^R$ be the vector whose $r$th element is $\Lambda_r^{0,t}$, and let $Y^X,Y^Z\in\mathbb{Z}_{\ge 0}^R$ be the vectors whose $r$th elements are $Y_r^X(\Lambda_r^{0,t})$ and $Y_r^Z(\Lambda_r^{0,t})$, respectively. Then
\begin{align*}
P_\nu(X(t) = Z(t)) &= P_\nu\left(S Y^X = S Y^Z\right)\\
&=  P_\nu\left(S(Y^X - Y^Z) =0 \right)\\
&= \!\!\!\! \sum_{k\in\text{null}(S)} \!\!\!\!\! P_\nu(Y^X - Y^Z = k)\\
&= \!\!\!\! \sum_{k\in\text{null}(S)} \!\!\!\!\! \mathbb{E}_{\nu,0}\left[P(\left.Y^X-Y^Z = k\,\right|\, \Lambda^{0,t})\right].
\end{align*}
The elements of $Y^X$ and $Y^Z$ are independent when conditioned on $\Lambda^{0,t}$. Therefore we can expand the conditional probability into a product of probabilities, by which we mean
\begin{equation*}
P\left(Y^X - Y^Z = k\,|\, \Lambda^{0,t}\right) = \prod_{r=1}^R P\left(\left.Y_r^X - Y_r^Z = k_r \,\right|\, \Lambda_r^{0,t}\right).
\end{equation*}
When conditioned on $\Lambda_r^{0,t}$, $Y_r^X - Y_r^Z$ is the difference of two independent Poissons with the same intensity $\Lambda_r^{0,t}$. Therefore the difference follows a Skellam distribution. To summarize,
\begin{equation*}
K_r^{0,t} \stackrel{\text{def}}{=} Y_r^X-Y_r^Z \sim \text{Skellam}(\Lambda_r^{0,t},\Lambda_r^{0,t}), \text{ when conditioned on } \Lambda^{0,t}.
\end{equation*}
Continuing from above,
\begin{equation*}
P_\nu(X_{11}(t) = X_{12}(t)) = \!\!\!\! \sum_{k\in\text{null}(S)} \!\!\!\! \mathbb{E}_{\nu,0}\left[\prod_{r=1}^R P\left(\left.K_r^{0,t} = k_r \,\right|\, \Lambda_r^{0,t}\right)\right],\\
\end{equation*}
where the expectation is taken over $\Lambda^{0,t}$.

If we are estimating a marginal distribution, then we need to modify the sum slightly. Let $S'$ be the same as $S$, except the rows corresponding to the marginalized-out variables are removed. Then replace $\text{null}(S)$ with $\text{null}(S')$.
 
\subsection{Proof of \texorpdfstring{\cref{thm:clt_integrated_mse}}{central limit theorem for the integrated mean-squared error}}
Let $\{X_i(t-h)\}_{i=1}^n$ be i.i.d.\ realizations of $X(t-h)$. Define $X_{ij}(t)$ to be the state of the CTMC conditioned on $X_{ij}(t-h) = X_i(t-h)$, where $1\le j \le m$. For simplicity, later we will denote $X_{ij}(t)$ as just $X_{ij}$.

Let $M_i \in \mathbb{Z}_{\ge 0}^{|\tilde{\mathcal{S}}|}$, where the $k$th element of $M_i$ is defined as $\sum_{j=1}^m \mathbbm{1}(X_{ij}=x_k)$. Let $\Sigma\in \mathbb{R}^{|\tilde{\mathcal{S}}| \times |\tilde{\mathcal{S}}|}$ be the covariance matrix of $M_1$. The $M_i$ are i.i.d., so if $\Sigma$ is finite, then the usual multivariate central limit theorem implies that
\begin{equation*}
\frac{1}{\sqrt{n}} \sum_{i=1}^n \left(M_i - m p_t^\nu\right) \stackrel{d}{\to} N(0,\Sigma)\text{, as } n\to\infty.
\end{equation*}
Let $M_i(x)$ denote the element if $M_i$ corresponding to $x$. Then by definition, for all $x$
\begin{equation*}
n m \hat{p}_t^\nu(x;n,m,h)= \sum_{i=1}^n M_i(x).
\end{equation*}
Therefore
\begin{equation*}
\sqrt{n} m \left(\hat{p}_t^\nu-p_t^\nu\right)\stackrel{d}{\to} N(0,\Sigma)\text{, as } n\to\infty.
\end{equation*}
The dot product is continuous, so the continuous mapping theorem implies that
\begin{equation*}
n m^2 \sum_{x\in\tilde{\mathcal{S}}}\big(\hat{p}_t^\nu(x;n,m,h) - p_t^\nu(x)\big)^2 \stackrel{d}{\to} N(0,\Sigma)^T N(0,\Sigma)\text{, as } n\to\infty.
\end{equation*}
\cite[Theorem 2.1]{box1954} implies that the right side has the same distribution as $\sum_{\ell=1}^{|\tilde{S}|}\lambda_\ell Z_\ell^2$. 
Let $\Sigma_{xx}$ be the element of $\Sigma$ on the diagonal corresponding to state $x$. Then by definition
\begin{equation*}
\begin{split}
\Sigma_{xx} &= \text{Var}\left[\sum_{j=1}^m \mathbbm{1}(X_{1j}=x) \right]\\
&= \sum_{j=1}^m \text{Var}\left[\mathbbm{1}(X_{1j}=x)\right] + 2\!\!\!\!\!\! \sum_{1\le j < k \le m} \!\!\!\!\!\! \text{Cov}\left(\mathbbm{1}(X_{1j}=x), \mathbbm{1}(X_{1k}=x) \right).
\end{split}
\end{equation*}
$\text{Var}\left[\mathbbm{1}(X_{1j}=x)\right] = p_t^\nu(x)(1-p_t^\nu(x))$, and the covariance simplifies when we rewrite it in terms of expectations. We get
\begin{equation*}
\Sigma_{xx} = m p_t^\nu(x) + m(m-1)P_\nu(X_{11}(t)=x,X_{12}(t)=x) - m^2 p_t^\nu(x)^2 < \infty.
\end{equation*}
Let $x_1$ and $x_2$ be distinct states, and let $\Sigma_{x_1,x_2}$ be the element whose row and column correspond to the states $x_1$ and $x_2$, respectively. By definition
\begin{align*}
\Sigma_{x_1,x_2} &= \text{Cov}\left[\sum_{j=1}^m \mathbbm{1}(X_{1j}=x_1), \sum_{j=1}^m \mathbbm{1}(X_{1j}=x_2)\right]\\
&= \sum_{j=1}^m\sum_{k=1}^m \text{Cov}\left[ \mathbbm{1}(X_{1j}=x_1),  \mathbbm{1}(X_{1k}=x_2)\right].
\end{align*}
Rearrange the terms in the sum to get
\begin{align*}
\sum_{j=1}^m \text{Cov}\left[ \mathbbm{1}(X_{1j}=x_1) , \mathbbm{1}(X_{1j}=x_2)\right] + \sum_{j=1}^m \sum_{\substack{k=1 \\ k\neq j}}^m  \text{Cov}\left[ \mathbbm{1}(X_{1j}=x_1),  \mathbbm{1}(X_{1k}=x_2)\right],
\end{align*}
which is equivalent to
\begin{multline*}
\sum_{j=1}^m \Big(\mathbb{E}_{\nu,0}\left[\mathbbm{1}(X_{1j}=x_1) \mathbbm{1}(X_{1j}=x_2)\right] - p(x_1)p(x_2)\Big) + \\
\sum_{j=1}^m \sum_{\substack{k=1 \\ k\neq j}}^m  \Big(\mathbb{E}_{\nu,0}\left[\mathbbm{1}(X_{1j}=x_1) \mathbbm{1}(X_{1k}=x_2)\right] - p(x_1)p(x_2)\Big).
\end{multline*}
Since $x_1\neq x_2$, $\mathbbm{1}(X_{1j}=x_1) \mathbbm{1}(X_{1j}=x_2) = 0$. Also, the second expectation can be rewritten as a probability. The above expression simplifies to
\begin{equation*}
m(m-1) P_\nu\left( X_{11}(t) = x_1, X_{12}(t) = x_2\right) -m^2 p^\nu_t(x_1)p^\nu_t(x_2) < \infty.
\end{equation*}
Equation \cref{def:sigma} simply expresses the above results with matrix-vector notation.

If we are estimating a marginal distribution, then take $\mathcal{S}$ to be the lower dimensional space corresponding to the marginal variables. Also interpret $X(t)$ as the state vector containing only the marginal variables.

\subsection{Proof of \texorpdfstring{\cref{thm:traces}}{sample variance trace theorem}}
If we write out the definition of $\hat{\Sigma}_n$ and use the fact that the trace is linear, we can see that
\begin{equation*}
\tr{\hat{\Sigma}_n} = \frac{1}{n-1} \sum_{i=1}^n \tr{\left(M_i - \bar{M}\right) \left(M_i - \bar{M}\right)^T}.
\end{equation*}
We use the cyclic property of the trace to rewrite the right side as
\begin{equation*}
\frac{1}{n-1} \sum_{i=1}^n \left(M_i - \bar{M}\right)^T \left(M_i - \bar{M}\right).
\end{equation*}
Expanding the summands leads to
\begin{equation*}
\frac{1}{n-1} \sum_{i=1}^n \left(M_i^T M_i - 2 \bar{M}^T M_i + \bar{M}^T\bar{M}\right).
\end{equation*}
From the definition of $\bar{M}$, the above expression is equal to
\begin{equation*}
- \frac{n}{n-1} \bar{M}^T\bar{M}+ \frac{1}{n-1} \sum_{i=1}^n M_i^T M_i.
\end{equation*}
By definition, $m \hat{p}_t = \bar{M}$, therefore
\begin{equation*}
\tr{\hat{\Sigma}_n} =- \frac{nm^2}{n-1} (\hat{p}_t^\nu)^T \hat{p}_t^\nu + \frac{1}{n-1} \sum_{i=1}^n M_i^T M_i.
\end{equation*}
Next consider $\tr{\hat{\Sigma}_n^2}$. We will proceed in a similar way. By definition
\begin{align*}
\hat{\Sigma}_n^2 &= \frac{1}{(n-1)^2}\left[\sum_{i=1}^n (M_i-\bar{M})(M_i-\bar{M})^T \right]^2\\
&= \frac{1}{(n-1)^2} \sum_{i=1}^n \sum_{j=1}^n (M_i-\bar{M})(M_i-\bar{M})^T (M_j-\bar{M})(M_j-\bar{M})^T.
\end{align*}
The trace is linear, so
\begin{align*}
\tr{\hat{\Sigma}_n^2} &= \frac{1}{(n-1)^2} \sum_{i=1}^n \sum_{j=1}^n \tr{(M_i-\bar{M})(M_i-\bar{M})^T (M_j-\bar{M})(M_j-\bar{M})^T}\\
&= \frac{1}{(n-1)^2} \sum_{i=1}^n \sum_{j=1}^n \left[(M_i-\bar{M})^T(M_j-\bar{M})\right]^2.
\end{align*}
The last line follows from the cyclic property of the trace. When we expand the summands, the right side becomes
\begin{equation*}
\frac{1}{(n-1)^2} \sum_{i=1}^n \sum_{j=1}^n \left[ M_i^T M_j - \bar{M}^T M_i - \bar{M}^T M_j  + m^2 (\hat{p}_t^\nu)^T \hat{p}_t^\nu \right]^2.
\end{equation*}
As for the claim about almost sure convergence of the traces, note that $\hat{\Sigma}_n\stackrel{\text{a.s.}}{\to} \Sigma$. Since matrix multiplication and the trace are continuous, the continuous mapping theorem implies the result.

\subsection{Proof of \texorpdfstring{\cref{thm:upper_confidence_bound}}{confidence interval theorem}}
Define
\begin{equation*}
U = \frac{\tr{\Sigma^2}}{\tr{\Sigma}}\chi_\alpha^2\left(\frac{\tr{\Sigma}^2}{\tr{\Sigma^2}} \right).
\end{equation*}
Since $\hat{\Sigma}_n \stackrel{\text{a.s.}}{\to} \Sigma \text{ as } n\to\infty$, the continuous mapping theorem and \cref{thm:un_to_u} taken together imply that $U_n\to U$ almost surely as $n\to \infty$. Also \cref{thm:clt_integrated_mse} says that
\begin{equation*}
nm^2 \sum_{x\in\tilde{\mathcal{S}}} \big(\hat{p}_t^\nu(x;n,m,h)-p_t^\nu(x)\big)^2 \stackrel{d}{\to} \sum_{\ell=1}^{|\tilde{S}|} \lambda_\ell Z_\ell^2, \text{ as } n\to\infty.
\end{equation*}
Therefore by Slutsky's theorem
\begin{equation*}
\frac{nm^2 \sum_{x\in\tilde{\mathcal{S}}} \big(\hat{p}_t^\nu(x;n,m,h)-p_t^\nu(x)\big)^2}{U_n} \stackrel{d}{\to} \frac{\sum_{\ell=1}^{|\tilde{S}|} \lambda_\ell Z_\ell^2}{U}, \text{ as } n\to\infty,
\end{equation*}
which we can rewrite as
\begin{equation*}
\lim_{n\to\infty} P_\nu\left(nm^2 \sum_{x\in\tilde{\mathcal{S}}} \big(\hat{p}_t^\nu(x;n,m,h)-p_t^\nu(x)\big)^2 \le U_n \right) = P\left(\sum_{\ell=1}^{|\tilde{S}|} \lambda_\ell Z_\ell^2 \le U\right).
\end{equation*}
Applying the Satterthwaite approximation \cite{satterthwaite1941} to the right-hand side gives
\begin{align*}
&\lim_{n\to\infty} P_\nu\left(nm^2\sum_{x\in\tilde{\mathcal{S}}} \big(\hat{p}_t^\nu(x;n,m,h)-p_t^\nu(x)\big)^2 \le U_n \right)\\
&\quad\quad\approx P\left(\frac{\tr{\Sigma^2}}{\tr{\Sigma}} \chi^2 \left(\frac{\tr{\Sigma}^2}{\tr{\Sigma^2}} \right) \le U \right)\\
&\quad\quad= 1-\alpha.
\end{align*}
The result still holds for marginal distributions. We just need to remove the coordinates of $\tilde{\mathcal{S}}$ corresponding to the variables that are marginalized out.

\begin{lemma}\label{thm:un_to_u}
Let $X_\theta$ be a family of random variables parameterized by $\theta \in \mathbb{R}$ with strictly increasing cumulative distribution functions $F_\theta$.  Suppose that for each $\theta$, the function $F_\theta$ is continuous. Assume also that  $F_\theta(x)$ is continuous in $\theta$ for each $x \in \mathbb{R}$.  Then the $1-\alpha$ quantiles of $F_\theta$ are also continuous in $\theta$ for all $\alpha\in(0,1)$.
\end{lemma}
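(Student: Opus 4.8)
The plan is to work directly from the definition of the quantile. For each $\theta$, since $F_\theta$ is continuous and strictly increasing with $F_\theta(x)\to 0$ as $x\to-\infty$ and $F_\theta(x)\to 1$ as $x\to+\infty$, the intermediate value theorem guarantees a unique point $q(\theta)$ with $F_\theta(q(\theta)) = 1-\alpha$; this is exactly the $1-\alpha$ quantile, and it is well defined for every $\alpha\in(0,1)$. My goal is to show that $\theta \mapsto q(\theta)$ is continuous, so I fix an arbitrary $\theta_0$ and $\epsilon > 0$ and produce a $\delta > 0$ witnessing continuity at $\theta_0$.

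First I would use the strict monotonicity of $F_{\theta_0}$ to sandwich the target value. Writing $y_0 = q(\theta_0)$, strict monotonicity gives $F_{\theta_0}(y_0 - \epsilon) < 1-\alpha < F_{\theta_0}(y_0 + \epsilon)$, with both inequalities strict. This creates a genuine gap on either side of $1-\alpha$ that I can exploit when I perturb $\theta$.

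Next I would invoke the hypothesis that $F_\theta(x)$ is continuous in $\theta$ for each fixed $x$, applied at the two abscissae $x = y_0 - \epsilon$ and $x = y_0 + \epsilon$. Because $F_{\theta_0}(y_0 - \epsilon)$ lies strictly below $1-\alpha$ and $F_{\theta_0}(y_0 + \epsilon)$ lies strictly above it, continuity in $\theta$ lets me pick radii $\delta_1,\delta_2 > 0$ so that $|\theta - \theta_0| < \delta_1$ keeps $F_\theta(y_0 - \epsilon) < 1-\alpha$ and $|\theta - \theta_0| < \delta_2$ keeps $F_\theta(y_0 + \epsilon) > 1-\alpha$. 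Setting $\delta = \min(\delta_1,\delta_2)$, for all $\theta$ with $|\theta - \theta_0| < \delta$ we retain $F_\theta(y_0 - \epsilon) < 1-\alpha < F_\theta(y_0 + \epsilon)$. Since $F_\theta$ is increasing, this forces its $1-\alpha$ quantile to satisfy $y_0 - \epsilon < q(\theta) < y_0 + \epsilon$, i.e. $|q(\theta) - q(\theta_0)| < \epsilon$, which is precisely continuity at $\theta_0$.

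I expect the only delicate point to be the bookkeeping that converts the two separate pointwise continuity statements into a single $\delta$, namely taking the minimum of the two radii and checking that each keeps $F_\theta(y_0 \pm \epsilon)$ on the correct side of $1-\alpha$. Everything else is a routine $\epsilon$-$\delta$ argument; in particular I do \emph{not} need any uniformity of continuity in $x$---only continuity of $F_\theta$ in $\theta$ at the two fixed points $y_0 \pm \epsilon$, which is exactly what the hypothesis supplies.
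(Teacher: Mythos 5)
Your proposal is correct and follows essentially the same route as the paper's proof: sandwich the quantile $q(\theta_0)$ between two nearby points, use strict monotonicity of $F_{\theta_0}$ to get strict inequalities around $1-\alpha$, invoke pointwise continuity in $\theta$ at those two fixed abscissae to preserve the inequalities for nearby $\theta$, and then trap $q(\theta)$ by monotonicity. The only difference is cosmetic (an $\epsilon$-$\delta$ formulation versus the paper's sequential one), so no further changes are needed.
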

\begin{proof}
Let $\alpha\in(0,1)$, and let $\{\theta_n\}_{n=1}^\infty$ be a sequence that converges to $\theta$. Define $q_n$ and $q$ to be the $1-\alpha$ quantiles corresponding the $\theta_n$ and $\theta$, respectively. We want to show that $q_n$ converges to $q$.

 Let $\varepsilon > 0$. Since $\alpha \in (0,1)$, we know that $q$ is finite.  Therefore, we can choose $\underline{q}$ and $\overline{q}$ such that
\begin{equation*}
\underline{q} < q < \overline{q} \quad \text{ and } \quad \overline{q} - \underline{q} < \varepsilon.
\end{equation*}
We want to show that $|q_n - q| < \varepsilon$ for all sufficiently large $n$, so it will suffice to prove that $\underline{q} < q_n < \overline{q}$ for all $n$ large enough.

By assumption, $F_\theta(\underline q)$ is continuous in $\theta$, so
\begin{equation*}
\lim_{n\to\infty} F_{\theta_n}(\underline{q}) = F_\theta(\underline{q}) < F_\theta(q) = 1-\alpha = F_{\theta_n}(q_n).
\end{equation*}
The inequality is strict, because $q$ is a quantile and $F_\theta$ is strictly increasing and $\underline{q} < q$.
Since $F_{\theta_n}$ is non--decreasing, $q_n > \underline{q}$ for all sufficiently large $n$. We can use essentially the same argument to conclude that $q_n < \overline{q}$ for all $n$ large enough.
\end{proof}

\section{Expectations}\label{sec:expectations}

The specific conditional Monte Carlo method introduced in this paper has been developed to estimate the entire distribution in a manner that is more efficient than regular Monte Carlo, as quantified by the mean  integrated squared error \cref{eq:tominimize} for a fixed computational budget.  This does not imply that it will be more efficient in the computation of any specific expectation.  In fact, in this Appendix we prove that it is necessarily less efficient in computing the first moment of a linear birth model.  Specifically, we prove that for a fixed computational budget the variance of the estimator generated via the conditional Monte Carlo method is greater than or equal to the variance of the standard Monte Carlo estimator.  This demonstrates that caution is required when  implementing a method in a context it was not   intended for.

Recall the Birth Model, which consists of the single reaction $X\xrightarrow{1} 2X,$
where we have chosen a rate parameter of 1.  Assuming a fixed initial condition of $X_0\in \Z_{\ge0}$, it is straightforward to show that
\[
	\mathbb{E}[X(t)] = X_0 e^t \quad \text{and}\quad \text{Var}[X(t)] = X_0 e^t(e^t - 1).
\]


For a fixed number of paths $n_1$, and a point mass $X_0$, the standard Monte Carlo estimator has an expected cost--quantified by the number of random variables utilized--of
\[
	\text{Cost}_\text{MC}(n_1) = \mathbb{E}[ n_1 ( X(t) - X_0)] = n_1 X_0 ( e^t - 1),
\]
and a variance of $\text{Var}\left[n_1^{-1} \sum_{i=1}^{n_1} X_i(t)\right] = n_1^{-1} \text{Var}(X_1(t))= n_1^{-1} X_0 e^t(e^t - 1)$.

For a fixed number of paths $n$ and $m$, and a fixed parameter $h \in [0,t]$, the expected cost of the conditional Monte Carlo estimator is
\begin{align*}
	\text{Cost}_\text{CMC} (n,m,h) &= n\,\mathbb{E}[X_{1i}(t-h) - X_0] + n\cdot m\,\mathbb{E}[X_{1i}(t) - X_{1i}(t-h)]\\
	&= nX_0 (e^{t-h} - 1) + n\cdot m X_0 (e^{t}- e^{t-h}).
\end{align*}
The variance of the conditional Monte Carlo estimator is
\begin{align}\label{eq:8970978}
	\text{Var}\left[ \frac1n \frac1m \sum_{i=1}^n \sum_{j = 1}^m X_{ij}(t) \right] &= \frac1{n\cdot m^2} \text{Var}\left[ \sum_{j = 1}^m X_{ij}(t) \right].
\end{align}
Using the generic result that for random variables $X$ and $Y$ on the same probability space $\text{Var}(X) = \mathbb{E}[\text{Var}(X|Y)] + \text{Var}(\mathbb{E}[X|Y])$, we have
\begin{align*}
\text{Var}\left( \sum_{j = 1}^m X_{ij}(t) \right) &= \mathbb{E}\left[\text{Var}\left(\sum_{j = 1}^m X_{1j}(t) \bigg| X_{11}(t-h)\right)\right] \\
&\hspace{.2in} + \text{Var}\left( \mathbb{E}\left[ \sum_{j=1}^m X_{1j}(t) \bigg| X_{11} (t-h) \right]\right)\\
&= m\,\mathbb{E}[ \text{Var}(X_{1j}(t) | X_{11}(t-h)) ]+ \text{Var}(m\,\mathbb{E}[X_{1j}(t)|X_{11}(t-h)])\\
&= m\,\mathbb{E} \left[ X_{11}(t-h) e^h(e^h-1)  \right] + m^2 \text{Var}\left( X_{11}(t-h) e^h\right)\\
&= m X_0e^{t-h} e^h(e^h-1) + m^2 e^{2h} X_0e^{t-h}(e^{t-h}-1)\\
&= m X_0 e^t(e^h-1) + m^2 X_0 e^t(e^t - e^h)
\end{align*}
Thus, dividing by $n\cdot m^2$ as in \cref{eq:8970978}, the variance of the conditional Monte Carlo estimator is
\[
	\text{Var}\left[ \frac1n \frac1m \sum_{i=1}^n \sum_{j = 1}^m X_{ij}(t) \right] =\frac1{n\cdot m}\left[ X_0 e^t(e^h-1) + m X_0e^t(e^t-e^h)\right].
\]
For a fixed $n_1$, setting $\text{Cost}_\text{CMC}(n,m,h) = \text{Cost}_\text{MC}(n_1)$ yields
\begin{align*}
nX_0 (e^{t-h} - 1) + n\cdot m X_0 (e^{t}- e^{t-h}) = n_1 X_0 ( e^t - 1),
\end{align*}
or
\[
	n = \frac{n_1  ( e^t - 1)}{ (e^{t-h} - 1) + m  (e^{t}- e^{t-h}) }.
\]
Thus, for a fixed $n_1$ and $n$ chosen above the variance of the conditional Monte Carlo estimator is
\begin{align*}
	\frac1{n_1} \cdot \frac{(e^{t-h} - 1) + m  (e^{t}- e^{t-h})}{m (e^t-1)}\left[ X_0 e^t(e^h-1) + m X_0e^t(e^t-e^h)\right].
\end{align*}
This is minimized at the boundary with  $m = 1$, giving exactly the same variance as the regular Monte Carlo estimator.  Thus, to summarize, for a given fixed computational cost the variance of the conditional Monte Carlo estimator must be larger than the variance of the standard Monte Carlo estimator.

\section*{Acknowledgments}
We are grateful for financial support from the Army Research Office through grant W911NF-18-1-0324 and the National Science Foundation through grant DMS-2051498.

\bibliographystyle{siamplain}
\bibliography{conditional}
\end{document}


\maketitle

\section{Objective function figures}
This section contains figures related to 
the objective function of the optimization problem (3.10).


\begin{figure}[ht]
	\centering
	\includegraphics[width=\textwidth]{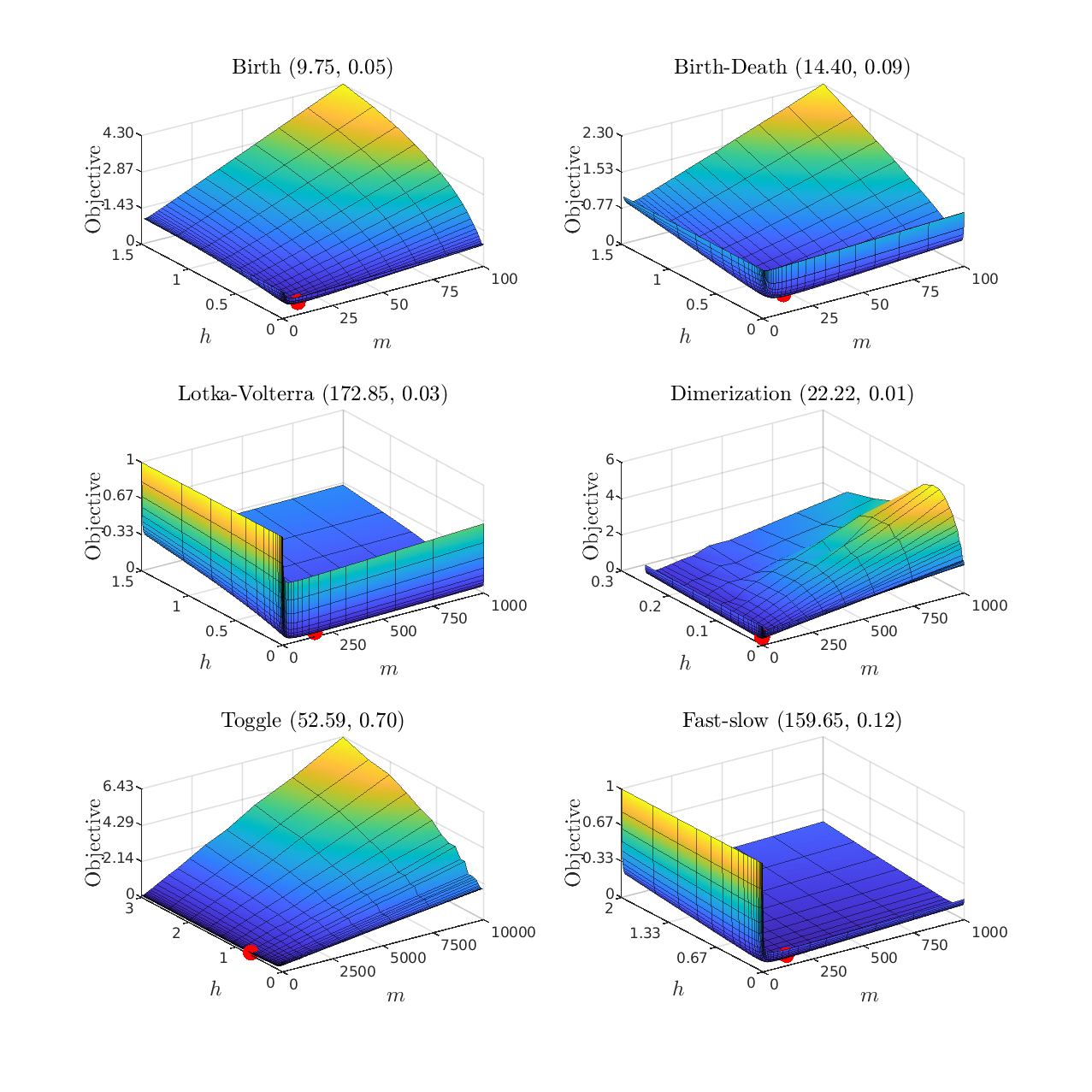}
	\caption{The surfaces are the approximate objective functions $\hat{f}(m,h)$ for the different example models. The red dots show where the minimums are achieved, and the pair of numbers in each title are the coordinates of the minimum in $m$ and $h$ space. The approximate objective function utilized the approximation  $\hat{p}^2=0$. For $\tilde{N}$, we took all possible linear combinations of the nullspace basis vectors with coefficient over $\{-4, \ldots, 4\}$. We used $10^3$ samples to estimate the parameters in the objective function. We used MATLAB's \textit{fminsearch} function (a derivative-free optimizer) to find a minimizer of $\hat{f}(m,h)$.}
	\label{fig:objective_functions}
\end{figure}

\begin{figure}[ht]
\centering
\includegraphics[width=\textwidth]{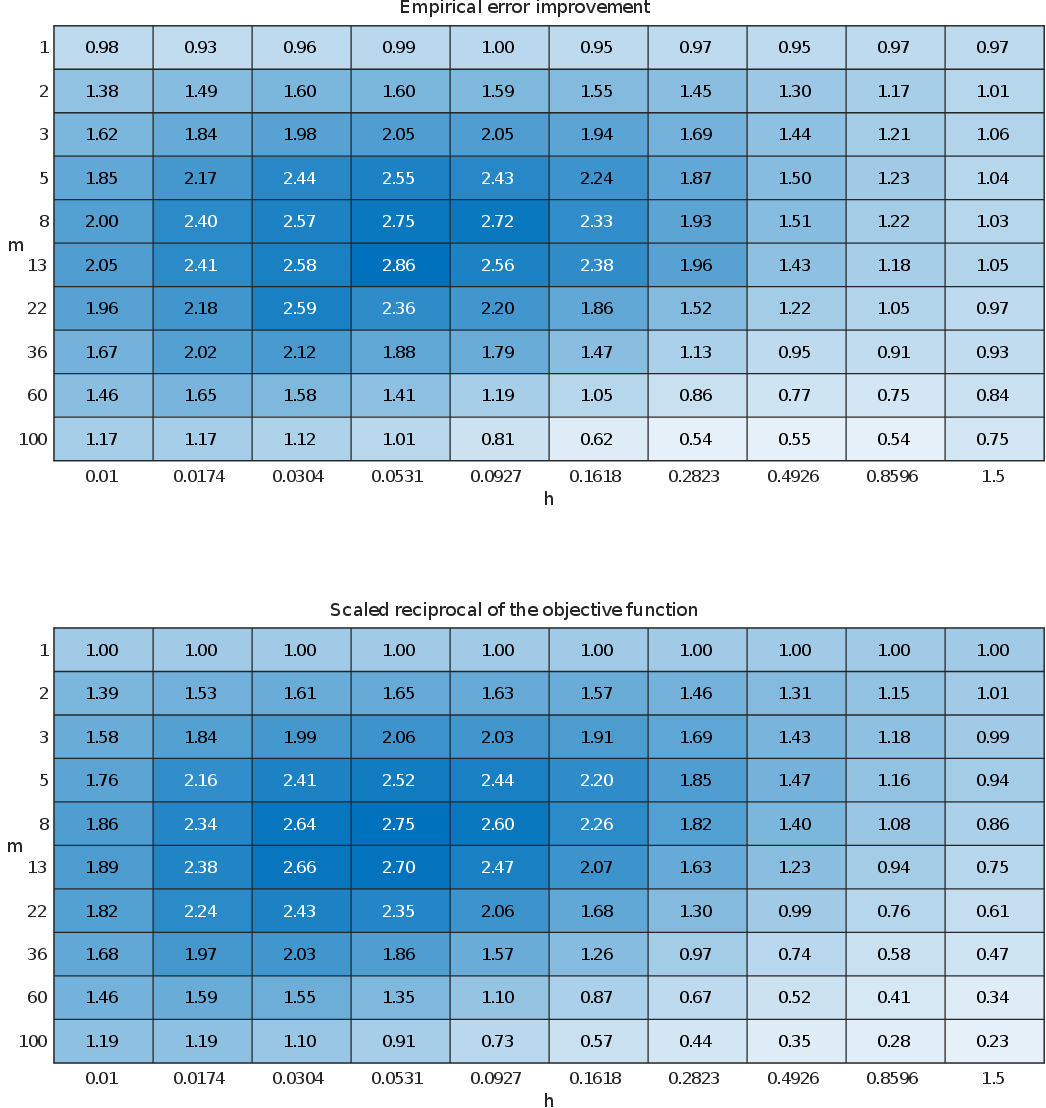}
\caption{Birth model. The first heatmap shows $\text{MISE}_\text{MC} / \text{MISE}_\text{CMC}(m,h)$ for different values of $m$ and $h$. The method we used to obtain the ratio is described in section 4. The second heatmap shows that value of $\hat{f}(1,0)/\hat{f}(m,h)$. The definition of $\hat{f}$ is in (3.9)}
\label{fig:heatmap_birth}
\end{figure}

\begin{figure}[ht]
\centering
\includegraphics[width=\textwidth]{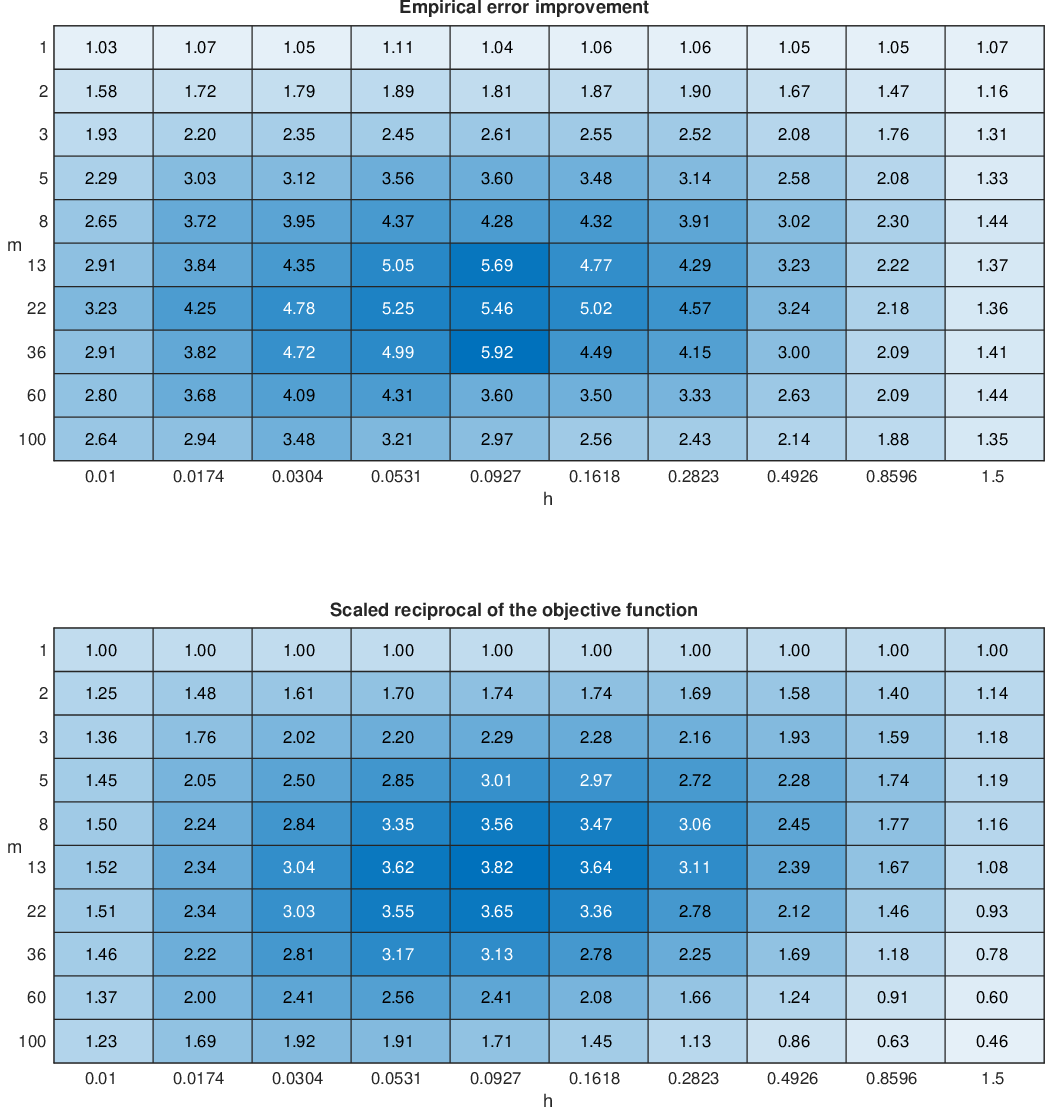}
\caption{Birth-death model. The first heatmap shows $\text{MISE}_\text{MC} / \text{MISE}_\text{CMC}(m,h)$ for different values of $m$ and $h$. The method we used to obtain the ratio is described in section 4. The second heatmap shows that value of $\hat{f}(1,0)/\hat{f}(m,h)$. The definition of $\hat{f}$ is in (3.9).}
\label{fig:heatmap_birth_death}
\end{figure}

\begin{figure}[ht]
\centering
\includegraphics[width=\textwidth]{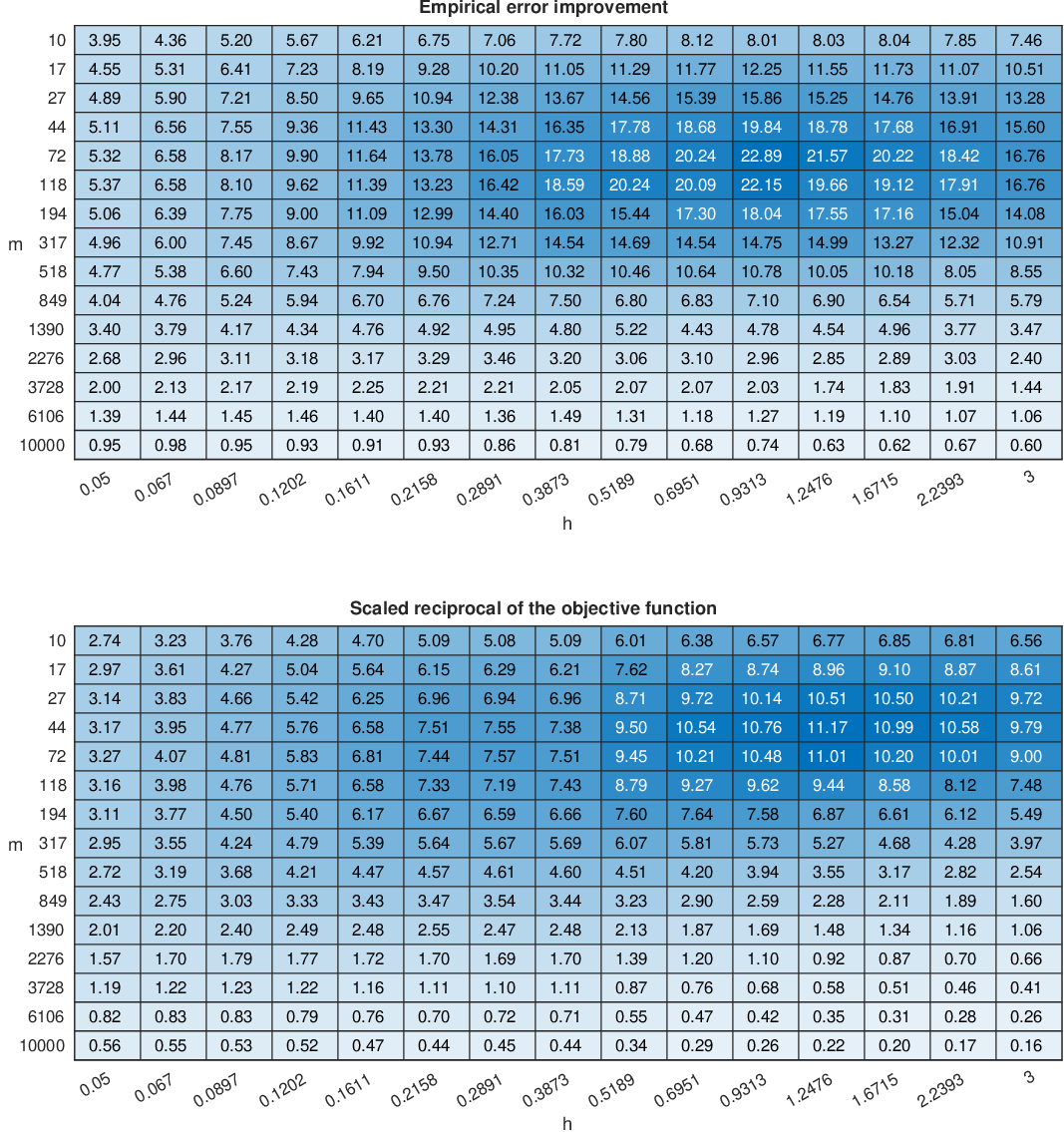}
\caption{Gene toggle model. The first heatmap shows $\text{MISE}_\text{MC} / \text{MISE}_\text{CMC}(m,h)$ for different values of $m$ and $h$. The method we used to obtain the ratio is described in section 4. The second heatmap shows that value of $\hat{f}(1,0)/\hat{f}(m,h)$. The definition of $\hat{f}$ is in (3.9).}
\label{fig:heatmap_toggle}
\end{figure}

\begin{figure}[ht]
\centering
\includegraphics[width=\textwidth]{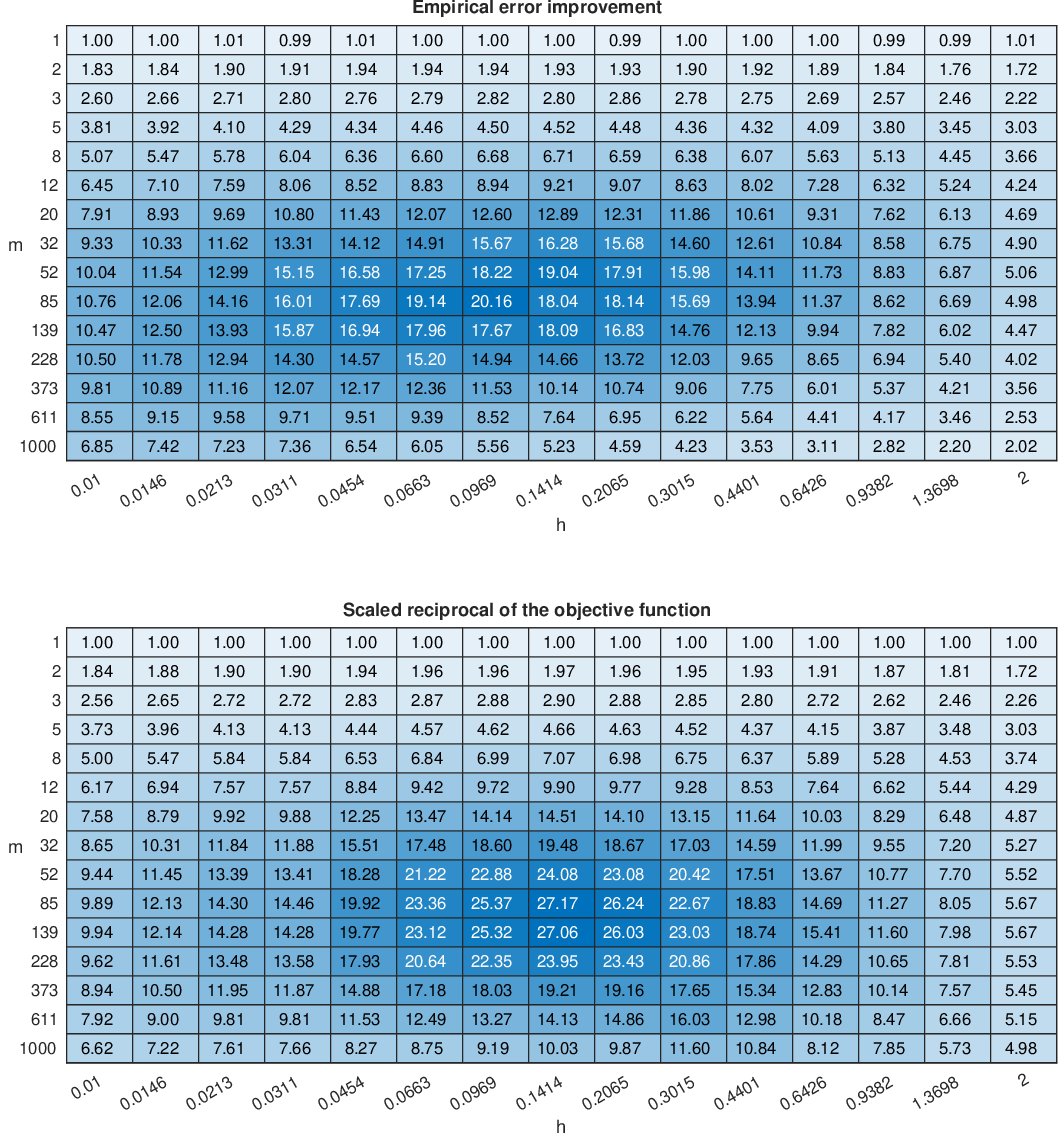}
\caption{Fast/slow model. The first heatmap shows $\text{MISE}_\text{MC} / \text{MISE}_\text{CMC}(m,h)$ for different values of $m$ and $h$. The method we used to obtain the ratio is described in section 4. The second heatmap shows that value of $\hat{f}(1,0)/\hat{f}(m,h)$. The definition of $\hat{f}$ is in (3.9).}
\label{fig:heatmap_fast_slow}
\end{figure}

\clearpage

\section{Central limit theorem figures}
This section contains figures related to the central limit theorem (Theorem 5.1). The dashed blue density is the empirical density of the integrated squared error (5.1), and the solid red density is the Satterwaithe approximation to the asymptotic density (5.4). On the x-axis there is a blue cross and red circle. The blue cross is the 95\% sample quantile, and the red circle is the 95\% quantile of the red density. To generate the blue curve, we sampled $10^4$ values of $nm^2\sum_{x\in\tilde{\mathcal{S}}} \big(\hat{p}_t(x;n,m,h)-p_t(x)\big)^2$ for different values of $n$, and then we gave those values to MATLAB's \textit{ksdensity} function. The traces of $\Sigma$ and $\Sigma^2$ were based an independent set of $10^5$ simulations. Since the densities and quantiles agree, the red density and its corresponding 95\% quantile are a good approximation. Therefore we can use them to provide confidence intervals for the integrated squared error.

\begin{figure}[htbp]
    \centering
    \includegraphics[width=\textwidth]{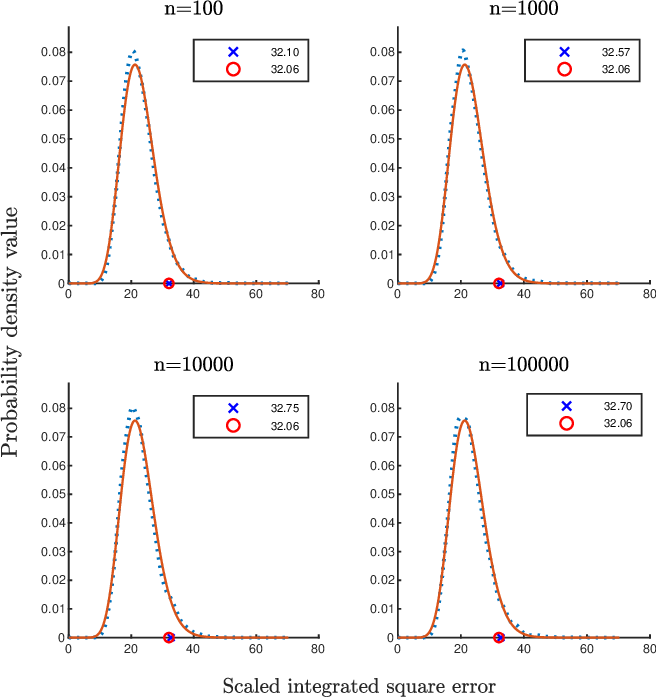}
    \caption{Birth model.}
    \label{fig:clt_birth}
\end{figure}

\begin{figure}[htbp]
    \centering
    \includegraphics[width=\textwidth]{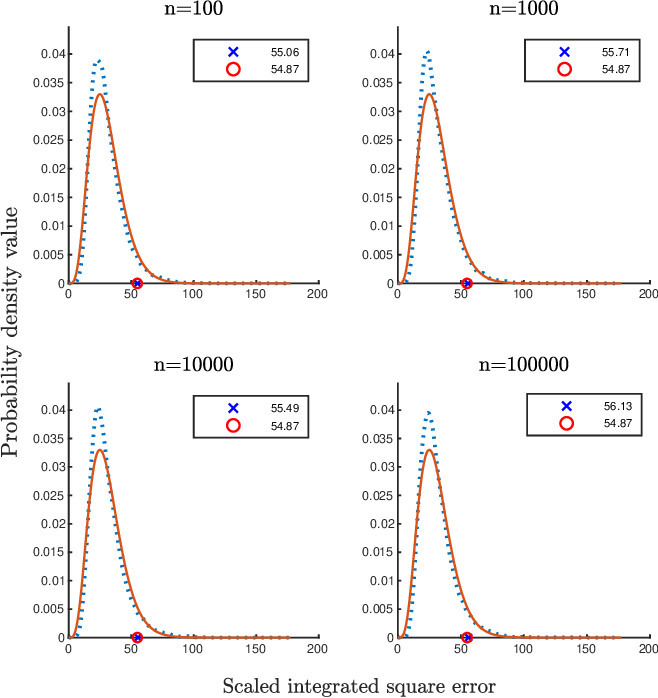}
    \caption{Birth-death model.}
    \label{fig:clt_birth_death}
\end{figure}

\begin{figure}[htbp]
    \centering
    \includegraphics[width=\textwidth]{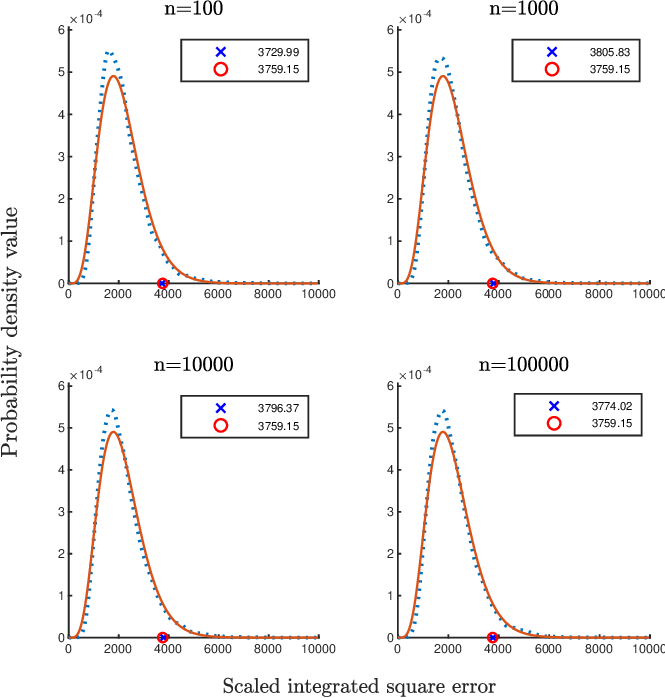}
    \caption{Toggle model.}
    \label{fig:clt_toggle}
\end{figure}

\begin{figure}[htbp]
    \centering
    \includegraphics[width=\textwidth]{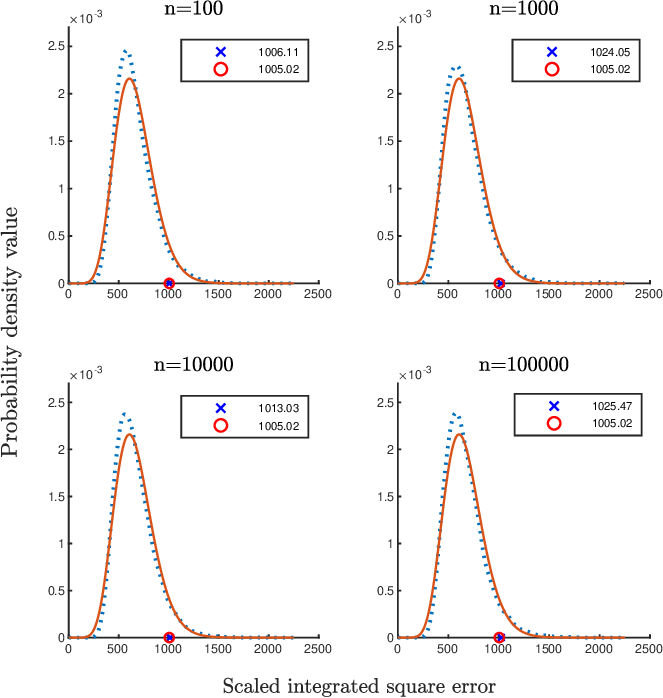}
    \caption{Fast/slow model.}
    \label{fig:clt_fast_slow}
\end{figure}